\numberwithin{equation}{section}
\numberwithin{figure}{section}
  \theoremstyle{plain}
  \newtheorem*{thm*}{\protect\theoremname}
\theoremstyle{plain}
\newtheorem{thm}{\protect\theoremname}
  \theoremstyle{plain}
  \newtheorem{lem}[thm]{\protect\lemmaname}
  \theoremstyle{plain}
  \newtheorem{cor}[thm]{\protect\corollaryname}
  \theoremstyle{plain}
  \newtheorem{conjecture}[thm]{\protect\conjecturename}
\newtheorem{proposition}[thm]{\protect\propname}
  \theoremstyle{plain}
\newtheorem*{cor*}{\protect\corollaryname}
  \theoremstyle{plain}
  \providecommand{\conjecturename}{Conjecture}
  \providecommand{\corollaryname}{Corollary}
  \providecommand{\lemmaname}{Lemma}
  \providecommand{\theoremname}{Theorem}
\providecommand{\propname}{Proposition}
\newcommand{\alg}{\operatorname{Alg}}
\newcommand{\norm}[1]{\left\Vert#1\right\Vert}
\newcommand{\paren}[1]{\left(#1\right)}
\newcommand{\abs}[1]{\left|#1\right|}
\newcommand{\R}{\mathbb{R}}
\newcommand{\C}{\mathbb{C}}
\begin{document}

\title{Regularity of Polynomials in Free Variables}

\author{Ian Charlesworth and Dimitri Shlyakhtenko}

\thanks{Research supported by NSF grants DMS-1161411 and DMS-1500035, and NSERC award PGS-6799-438440-2013.}
\begin{abstract}
We show that the spectral measure of any non-commutative polynomial
of a non-commutative $n$-tuple cannot have atoms if the free entropy
dimension of that $n$-tuple is $n$ (see also work of Mai, Speicher, and Weber).
Under stronger assumptions on the $n$-tuple, we prove that the spectral measure is not singular, and measures of intervals surrounding any point
may not decay slower than polynomially as a function of the interval's length.
\end{abstract}
\maketitle

\section{Introduction.}

It was shown in \cite{shlyakhtenko-skoufranis:atoms} that if $y_{1},\dots,y_{n}$
are free self-adjoint non-commutative random variables and $P$ is
any self-adjoint non-commutative polynomial in $n$ indeterminates,
then the spectral measure of $y=P(y_{1},\dots,y_{n})$ cannot have
atoms, unless $P$ is constant. This in particular implies that a
polynomial of $n$ semicircular elements cannot have atoms. When applied
to random matrix theory, this shows that if $Y_{1},\dots,Y_{n}$ are
independent Gaussian Random Matrices, then the eigenvalues of $P(Y_{1},\dots,Y_{N})$
exhibit a very weak form of repulsion: the expected proportion of
the number of eigenvalues in any interval $[a,b]$ goes to zero with
$N$ as the size of the interval shrinks. 

Statements of this kind can be considered to be part of the study
of consequences of regularity assumptions on non-commutative transformations,
which is of significant interest due to, for example, the results
of \cite{guionnet-shlyakhtenko:transport}. Since \cite{shlyakhtenko-skoufranis:atoms}, there
have been important advances in this direction. Very recently, Figalli
and Guionnet \cite{guionnet-figalli:transport} have used transport
maps to give a full picture of the behavior of eigenvalues of $P(Y_{1},\dots,Y_{n})$
under the assumption that $P$ is close to the identity transformation.
Finally, Mai, Speicher and Weber \cite{msw-noatoms} have been able able to obtain a regularity result for  $p(y_{1},\dots,y_{n})$ in absence of freeness assumption and beyond the perturbative regime.  While initially requiring a stronger assumption, they were also able to give a proof of the following theorem using their methods after seeing an early version of our note; our proof appears in the next section.
\begin{thm*}
Assume that Voiculescu's free entropy dimension \cite{dvv:entropy2,dvv:entropy5} 
$\delta^{*}(y_{1},\dots,y_{n})=n$. Then for any self-adjoint non-constant non-commutative
polynomial $P$, the spectral measure of $y=P(y_{1},\dots,y_{n})$
has no atoms. In particular, $\delta^{*}(y)=1$. 
\end{thm*}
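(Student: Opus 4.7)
The plan is to reduce the statement to a non-zero-divisor claim and then contradict that claim via the conjugate variable machinery. Suppose for contradiction that $y=P(y_{1},\dots,y_{n})$ has an atom at $\lambda$. Replacing $P$ by $P-\lambda$, we may take $\lambda=0$, so the spectral projection $p:=\chi_{\{0\}}(y)$ is a nonzero element of $M=W^{*}(y_{1},\dots,y_{n})$ satisfying $p\cdot P(y_{1},\dots,y_{n})=0$. It therefore suffices to show that, under the hypothesis $\delta^{*}(y_{1},\dots,y_{n})=n$, no nonconstant polynomial evaluation can be a left zero-divisor in $M$.

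To bring the assumption $\delta^{*}=n$ into play, I would pass to the semicircular perturbations $y_{i}^{t}=y_{i}+ts_{i}$, where $(s_{i})$ is a free standard semicircular family free from $(y_{i})$. Each perturbed tuple has finite free Fisher information, so the noncommutative free difference quotients $\partial_{i}\colon\C\langle y^{t}\rangle\to L^{2}(M^{t}\otimes M^{t})$ are closable, and the hypothesis is equivalent to $\liminf_{t\to 0^{+}}t\Phi^{*}(y^{t})=0$. The central step is to translate the relation $pP(y)=0$ into a constraint at the perturbed level: approximate $p$ by a smoothed functional calculus $f_{\epsilon}(P(y^{t}))$, apply the Leibniz rule to $f_{\epsilon}(P(y^{t}))\cdot P(y^{t})$, and pair against the conjugate variables of the perturbed tuple. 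The resulting identity features $\partial_{i}P$ evaluated at $y^{t}$---at least one of which is a nonzero element of $\C\langle y_{1},\dots,y_{n}\rangle^{\otimes 2}$ because $P$ is nonconstant---and, combined with $t\Phi^{*}(y^{t})\to 0$, should force $p=0$.

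The hard part is controlling the double limit $\epsilon,t\to 0$, that is, passing from the perturbed Schwinger--Dyson-type identity at finite $t$ to a genuine contradiction in the limit. This is essentially a question of domain control for the closures of the $\partial_{i}$ together with favorable quantitative dependence of the estimates on $\Phi^{*}(y^{t})$; getting both is the main technical obstacle. Once the absence of atoms is established, the ``in particular'' clause $\delta^{*}(y)=1$ follows immediately from Voiculescu's single-variable formula $\delta^{*}(y)=1-\sum_{a}\mu_{y}(\{a\})^{2}$.
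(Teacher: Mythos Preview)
Your initial reduction is correct: an atom at $\lambda$ gives a nonzero projection $p=\chi_{\{\lambda\}}(y)$ with $(P-\lambda)(y_{1},\dots,y_{n})\,p=0$, so it suffices to rule out nonconstant polynomial zero-divisors in $M$. The closing remark that $\delta^{*}(y)=1$ then follows from the single-variable formula is also fine.

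The gap is in the middle. You yourself flag the double limit $\epsilon,t\to 0$ as ``the main technical obstacle'' and then stop; what remains is a program, not a proof. Concretely, $f_{\epsilon}(P(y^{t}))$ is not in the polynomial domain of $\partial_{i}$, and closability at the perturbed level gives only $L^{2}$-domain information with no uniformity as $t\to 0$, while the conjugate variables you want to pair against blow up. Even granting the identity you envision, a single application does not force $p=0$: at best it yields something like $(q\otimes p)\#(\partial_{i}P)(y)=0$ for a suitable projection $q$, and from there one still needs an \emph{induction on the degree of $P$} to reach a contradiction. That degree-reduction step is entirely absent from your outline, and without it the observation ``at least one $\partial_{i}P$ is nonzero'' does not by itself conflict with $p\neq 0$.

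The paper avoids perturbations altogether. It invokes the Connes--Shlyakhtenko inequality $\delta^{*}\le\Delta_{1}\le n$: the hypothesis $\delta^{*}=n$ forces $\Delta_{1}=n$, which means the only trace-class tuple $(R_{1},\dots,R_{n})$ on $L^{2}(M)$ with $\sum_{j}[R_{j},Jy_{j}^{*}J]=0$ is zero. From $P(y)p=0$ one obtains a projection $q_{1}$ of the same trace with $q_{1}P(y)=0$; the identity $\sum_{j}\partial_{j}P\#(y_{j}\otimes 1-1\otimes y_{j})=P\otimes 1-1\otimes P$ then shows that $R_{j}:=(q_{1}\otimes p)\#(\partial_{j}P)(y)$ is such a cycle, so each $R_{j}=0$. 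Applying $\tau\otimes\mathrm{id}$ to the $q_{1}$-slot produces a polynomial $\Delta_{j,q_{1}}P$ of strictly lower degree still annihilated on the right by $p$; iterating down to degree zero forces $P$ to be constant. No limits in $t$ or $\epsilon$ are needed, and the role of $\delta^{*}=n$ is packaged cleanly into a single rigidity statement about trace-class cycles rather than spread across a perturbative estimate.
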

The assumption of the theorem is satisfied in each of the following
cases:
\begin{enumerate}
\item If $y_{j}$ are freely independent and each has non-atomic distribution
(compare \cite{shlyakhtenko-skoufranis:atoms}). Indeed, in this case
$\delta^{*}(y_{j})=1$ and so by free independence $\delta^{*}(y_{1},\dots,y_{n})=\sum\delta^{*}(y_{j})=n$.
In particular, this holds if $y_{1},\dots,y_{n}$ are a free semicircular
family.
\item If $\Phi^{*}(y_{1},\dots,y_{n})<+\infty$.
\item If Voiculescu's microstates free entropy \cite{dvv:entropy2} $\chi(y_{1},\dots,y_{n})$
or non-microstates free entropy \cite{dvv:entropy5} $\chi^{*}(y_{1},\dots,y_{n})$
are finite, or if the microstates free entropy dimension $\delta(y_{1},\dots,y_{n})=n$.
Indeed, in all of these cases $\delta^{*}(y_{1},\dots,y_{n})=n$ (making
use of the inequality between microstates and non-microstates free
entropy proved in \cite{bcg}).
\end{enumerate}
It is not hard to see that the hypothesis of this theorem is optimal.
Indeed, for a free $n$-tuple $y_{1},\dots,y_{n}$ in which the law
of $y_{1}$ has a single atom of weight $\lambda$ and the laws of
$y_{2},\dots,y_{n}$ are non-atomic, one has $\delta^{*}(y_{1},\dots,y_{n})=n-\lambda^{2}$.

We also prove that if $\delta^{*}(y_{1},\dots,y_{n})=n$, then there
can be no algebraic relations between convergent power series in $y_{1},\dots,y_{n}$.
This was proved under stronger assumptions by Dabrowski (see Lemma
37 in \cite{yoann:SPDE}); a version for polynomials appears in \cite{msw-noatoms}.

Our proof uses ideas from $L^{2}$-homology going back to the second author's joint
work with Connes \cite{cs} to give an alternate proof of the key
lemma in \cite{msw-noatoms}. The remainder
of the proof is essentially the same as in \cite{msw-noatoms}.

In Section \ref{sec:algent}, we demonstrate that if $y \in M$ has no atoms and is algebraic (i.e., has spectral measure with algebraic Cauchy transform), then $\chi(y) > -\infty$.
The argument hinges on work of Anderson and Zeitouni in \cite{anderson-zeitouni:06}, and in particular Theorem 2.9 of that paper, which controls the density of the spectral measure of such a variable $y$.
This leads us to the following result:
\begin{cor*}
Assume that $y_1, \ldots, y_n$ are free, algebraic, and $\chi(y_j) > -\infty$ for $1 \leq j \leq n$.
Then if $y = P(y_1, \ldots, y_n)$ with $P$ a non-constant self-adjoint polynomial, $\chi(y) > -\infty$.
\end{cor*}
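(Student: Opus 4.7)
The plan is to combine the two main contributions of the paper: the no-atoms theorem stated above, and the result advertised in Section \ref{sec:algent} that, for a single variable $y$, the combination of having no atoms and possessing an algebraic Cauchy transform already forces $\chi(y)>-\infty$. Given those two inputs, proving the corollary reduces to checking that $y=P(y_1,\dots,y_n)$ inherits both properties from the hypotheses on the $y_j$.

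First, I would verify that $y$ has no atoms. Since $\chi(y_j)>-\infty$, each $y_j$ satisfies $\delta^{*}(y_j)=1$ (this is recorded in item (3) of the list following the theorem). By free independence, the non-microstates free entropy dimension is additive, so $\delta^{*}(y_1,\dots,y_n)=n$. The main theorem then applies directly to give that the spectral measure of $y$ has no atoms, regardless of which non-constant self-adjoint polynomial $P$ one chooses.

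The second, and I expect more delicate, step is to verify that $y$ itself is algebraic in the sense of the corollary, i.e.\ its Cauchy transform is algebraic over $\C(z)$. The class of distributions with algebraic Cauchy transform is known to be closed under free additive and multiplicative convolution, and, crucially, Anderson and Zeitouni establish in \cite{anderson-zeitouni:06} that a self-adjoint polynomial evaluated on a free family of algebraic variables again has algebraic Cauchy transform; this is precisely the setting of their Theorem 2.9. Applying that closure property to $y_1,\dots,y_n$ and $P$ gives algebraicity of the distribution of $y$. This is the step where the force of the hypothesis ``algebraic'' is really used, and locating or quoting the precise closure statement in the literature is the main obstacle; once in hand, it is essentially plug-and-play.

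With both inputs secured, the corollary follows immediately: $y$ is a single self-adjoint variable in $M$ whose spectral measure has no atoms and whose Cauchy transform is algebraic, so the result of Section \ref{sec:algent} yields $\chi(y)>-\infty$.
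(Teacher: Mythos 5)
Your proposal takes essentially the same three-step approach as the paper: deduce $\delta^*(y_1,\dots,y_n)=n$ and hence absence of atoms for $y$, invoke a preservation-of-algebraicity result to conclude $y$ is algebraic, and then apply the discussion in Section \ref{sec:algent} to get $\chi(y)>-\infty$. One correction: the preservation-of-algebraicity statement (a self-adjoint polynomial in free algebraic variables is again algebraic) is not Theorem~2.9 of Anderson--Zeitouni \cite{anderson-zeitouni:06} — that theorem is the density bound near each point which is used to show $f\in L^p$ for some $p>1$ — but rather Anderson's later result in \cite{anderson:14}, which the paper cites precisely for this step. With that reference corrected, the argument matches the paper's proof.
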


In the final section of this paper, we show that under some stronger assumptions, such as the existence of a dual system, the spectral measure of $y = P(y_1, \ldots, y_n)$ cannot be purely singular with respect to Lebesgue measure, and in certain cases such as when $y$ is a monomial, must be absolutely continuous.
Additionally, we demonstrate that under the same assumption of a dual system, the spectral measure of $y$ must not decay at a rate slower than polynomially in the neighbourhood of any fixed point.

\section{Absence of Atoms and Zero Divisors.}

\subsection{Non-commutative polynomials, power series and difference quotients. }

If $M$ is an algebra, $u,v\in M$ and $T$ is an element of an $M,M$-bimodule,
then we use the notation $(u\otimes v)\#T=uTv$. We view $M\otimes M$
as an $M,M$-bimodule as follows: for $a\otimes b\in M\otimes M$,
$u,v\in M$, set 
\[
(u\otimes v)\#(a\otimes b)=ua\otimes bv.
\]

We denote by $\mathbb{C}[X_{1},\dots,X_{n}]$ the algebra of non-commuting
polynomials in $n$ indeterminates $X_{1},\dots,X_{n}$. We denote
by $\partial_{j}$ Voiculescu's difference quotient derivations \cite{dvv:entropy5}
with values in $\mathbb{C}[X_{1},\dots,X_{n}]\otimes\mathbb{C}[X_{1},\dots,X_{n}]$,
determined by $\partial_{j}X_{i}=\delta_{i=j}1\otimes1$.

We denote by $\mathbb{C}\langle X_{1},\dots,X_{n};R\rangle$ the completion
of $\mathbb{C}[X_{1},\dots,X_{n}]$ in the norm
\[
\left\Vert \sum_{N}\sum_{j_{1},\dots,j_{N}=1}^{n}\alpha_{N;j_{1},\dots,j_{N}}X_{j_{1}}\cdots X_{j_{N}}\right\Vert _{R}=\sum_{N}\sum_{j_{1},\dots,j_{N}=1}^{n}\left|\alpha_{N;j_{1},\dots,j_{N}}\right|R^{N}.
\]
We will also write
\[
\mathcal{A}[X_{1},\dots,X_{n};R]=\bigcup_{R'>R}\mathbb{C}\langle X_{1},\dots,X_{n};R\rangle
\]
for the algebra of power series with norm strictly bigger than $R$.
Of course, if $y_{1},\dots,y_{n}$ are elements of any Banach algebra
$M$ and $\max_{j}\Vert y_{j}\Vert<R$, then there is a unique homomorphism
from $\mathcal{A}[X_{1},\dots,X_{n};R]$ which sends $X_{j}$ to $y_{j}$,
$j=1,\dots,n$. For $P\in\mathcal{A}[X_{1},\dots,X_{n};R]$, we write
$P(y_{1},\dots,y_{n})$ for the image of $P$ under this homomorphism.

It is not hard to see that $\partial_{j}$ extends to an (unbounded)
derivation
\[
\partial_{j}:\mathcal{A}[X_{1},\dots,X_{n};R]\to\mathcal{A}[X_{1},\dots,X_{n};R]\hat{\otimes}\mathcal{A}[X_{1},\dots,X_{n};R],
\]
where $\hat{\otimes}$ denotes the projective tensor product. Likewise,
if $B$ is any Banach bimodule over $\mathbb{C}[X_{1},\dots,X_{n}]$
for which the right and left actions of $X_{j}$ are bounded operators
of norm at most $R$, then $\#$ extends to an action of $\mathcal{A}[X_{1},\dots,X_{n};R]\hat{\otimes}\mathcal{A}[X_{1},\dots,X_{n};R]$
on $B$. In particular, if $Q$ is any element of the space $\mathcal{L}^{1}(L^{2}(M))$
of trace-class operators on $M$, then $P\mapsto\partial_{j}(P)\#Q$
extends to a map defined on $\mathcal{A}[X_{1},\dots,X_{n};R]$. 

Let $\mathcal{N}:\mathcal{A}[X_{1},\dots,X_{n};R]\to\mathcal{A}[X_{1},\dots,X_{n};R]$
be given by $\mathcal{N}P=\sum_{j=1}^{n}\partial_{j}P\#X_{j}$. Thus
$\mathcal{N}$ is a kind of number operator, multiplying a monomial
by its degree. 

We note that the formula
\begin{equation}
\phi_{t}(P)=\sum_{k\geq0}\mathcal{N}^{k}(2\pi i\ t)^{k}/k!\label{eq:phi}
\end{equation}
gives rise to an automorphism of $\mathcal{A}[X_{1},\dots,X_{n};R]$
which multiplies a monomial of degree $d$ by $\exp(2\pi i\ t\ d)$.

\subsection{Algebraic relations and Hochschild cycles.}

The following lemma shows that algebraic relations in the algebra
generated by $y_{1},\dots,y_{n}$ produce Hochschild cycles: 
\begin{lem}
\label{lemma:derivations} Suppose that $P\in\mathbb{\mathcal{A}}[X_{1},\dots,X_{n};R]$
and suppose that for some self-adjoint elements $y_{1},\dots,y_{n}\in(M,\tau)$
with $\max\Vert y_{j}\Vert<R$ and $u,v\in M$, $(u\otimes v)\#(P(y_{1},\dots,y_{n})\otimes1-1\otimes P(y_{1},\dots,y_{n}))=0$.
Let $T_{i}=(\partial_{i}P)(y_{1},\dots,y_{n})\in M\otimes M$. Then
\[
\sum(u\otimes v)\#T_{i}\#(y_{i}\otimes1-1\otimes y_{i})=0.
\]
Thus if we put for $y,y'\in M$, $Jy^{*}J(a\otimes b)J(y')^{*}J=ay\otimes y'b$,
the identity 
\[
\sum[uT_{i}v,Jy_{i}^{*}J]=0
\]
holds. \end{lem}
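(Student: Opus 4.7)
The strategy is to establish and then exploit the derivation identity
\begin{equation*}
\sum_{i=1}^{n}\partial_i Q\#(X_i\otimes 1 - 1\otimes X_i) = Q\otimes 1 - 1\otimes Q, \qquad Q\in\mathbb{C}[X_1,\dots,X_n].
\end{equation*}
For a monomial $Q = X_{j_1}\cdots X_{j_N}$, expanding $\partial_i Q = \sum_{k:\,j_k=i} X_{j_1}\cdots X_{j_{k-1}}\otimes X_{j_{k+1}}\cdots X_{j_N}$ turns the double sum $\sum_{i}\sum_{k:\,j_k=i}$ into a single sum over $k\in\{1,\dots,N\}$ whose terms telescope to $Q\otimes 1 - 1\otimes Q$. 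Linearity extends this to $\mathbb{C}[X_1,\dots,X_n]$, and the continuity properties of $\partial_i$ set up in the previous subsection extend it further to $\mathcal{A}[X_1,\dots,X_n;R]$.

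Next I would evaluate at $(y_1,\dots,y_n)$, which is legitimate since $\max_j\|y_j\|<R$, obtaining
\begin{equation*}
\sum_i T_i\#(y_i\otimes 1 - 1\otimes y_i) = P(y_1,\dots,y_n)\otimes 1 - 1\otimes P(y_1,\dots,y_n)
\end{equation*}
in the projective completion of $M\otimes M$. Applying $(u\otimes v)\#$ reduces the right-hand side to $uP(y_1,\dots,y_n)\otimes v - u\otimes P(y_1,\dots,y_n)v$, which vanishes by hypothesis. This yields the first conclusion.

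For the commutator statement, the prescribed bimodule rule $Jy^*J(a\otimes b)J(y')^*J = ay\otimes y'b$, applied to simple tensors with $y'=1$ on one side and $y=1$ on the other, shows (writing $T_i=\sum_k a_k^i\otimes b_k^i$) that
\begin{align*}
Jy_i^*J(uT_iv) &= \sum_k ua_k^i y_i\otimes b_k^i v = (u\otimes v)\#T_i\#(y_i\otimes 1),\\
(uT_iv)Jy_i^*J &= \sum_k ua_k^i\otimes y_i b_k^i v = (u\otimes v)\#T_i\#(1\otimes y_i).
\end{align*}
Hence $[uT_iv,Jy_i^*J] = -(u\otimes v)\#T_i\#(y_i\otimes 1 - 1\otimes y_i)$, and summing over $i$ and invoking the first conclusion gives $\sum_i[uT_iv,Jy_i^*J]=0$. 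The only step that calls for any real care is the extension of the derivation identity from polynomials to $\mathcal{A}[X_1,\dots,X_n;R]$, which is handled by density of $\mathbb{C}[X_1,\dots,X_n]$ and the continuity of $\partial_i$ already set up; the rest is entirely algebraic bookkeeping.
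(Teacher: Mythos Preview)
Your proof is correct and follows the same route as the paper's. The paper simply asserts the identity $\sum_i T_i\#(y_i\otimes 1-1\otimes y_i)=P(y_1,\dots,y_n)\otimes 1-1\otimes P(y_1,\dots,y_n)$ and then refers to the definition of the $Jy_i^*J$-action; you have merely filled in the telescoping argument for monomials, the density/continuity passage to $\mathcal{A}[X_1,\dots,X_n;R]$, and the explicit commutator computation that the paper leaves to the reader.
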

\begin{proof}
Note that 
\[
(u\otimes v)\#\sum T_{i}(y_{i}\otimes1-1\otimes y_{i})=(u\otimes v)\#(P(y_{1},\dots,y_{n})\otimes1-1\otimes P(y_{1},\dots,y_{n}))=0,
\]
which gives the first equation. The second equation follows from the
definition of the action of $Jy_{i}^{*}J$ and commutation of the
variables $u,v$ with the variables $Jy_{j}^{*}J$, $j=1,\dots,n$. 
\end{proof}
Let $\delta^{*}(y_{1},\dots,y_{n})$ be Voiculescu's non-microstates
free entropy dimension. Denote by $FR=FR(L^{2}(M))$ the space of
finite-rank operators on $L^{2}(M)$. Let $\Delta(y_{1},\dots,y_{n})$
be the quantity introduced in \cite{cs}: 
\[
\Delta(y_{1},\dots,y_{n})=n-\dim_{M\bar{\otimes}M}\overline{\{(T_{1},\dots,T_{n})\in FR(L^{2}(M)):\sum[T_{j},Jy_{j}^{*}J]=0\}},
\]
where the closure is taken in the Hilbert-Schmidt norm. 

Let us denote by $\mathcal{L}^{1}=\mathcal{L}^{1}(L^{2}(M))$ the
space of trace-class operators on $L^{2}(M)$. Let
\[
\Delta_{1}(y_{1},\dots,y_{n})=n-\dim_{M\bar{\otimes}M}\overline{\{(T_{1},\dots,T_{n})\in\mathcal{L}^{1}(L^{2}(M)):\sum[T_{j},Jy_{j}^{*}J]=0\}},
\]
where once again the closure is taken in the Hilbert-Schmidt norm.
Clearly, $\Delta_{1}\leq\Delta.$ By \cite[Lemma 4.1 and Theorem 4.4]{cs}
we have inequalities 
\[
\delta^{*}(y_{1},\dots,y_{n})\leq\Delta(y_{1},\dots,y_{n})\leq n.
\]
The same theorem (with exactly the same proof) shows that also $\delta^{*}(y_{1},\dots,y_{n})\leq\Delta_{1}(y_{1},\dots,y_{n})$
(indeed, the only change is in the Lemma 4.2, which obviously works
if we replace\emph{ $FR$} by $\mathcal{L}^{1}$). Thus 
\[
\delta^{*}(y_{1},\dots,y_{n})\leq\Delta_{1}(y_{1},\dots,y_{n})\leq\Delta(y_{1},\dots,y_{n})\leq n.
\]

In particular, if $\delta^{*}(y_{1},\dots,y_{n})=n$, then $\Delta(y_{1},\dots,y_{n})=n$,
which implies that 
\[
\overline{\{(T_{1},\dots,T_{n})\in\mathcal{L}^{1}(L^{2}(M)):\sum[T_{j},Jy_{j}^{*}J]=0\}}=\{0\}.
\]
We record this as:
\begin{cor}
\cite{cs}\label{thrm:cs} Let $M=W^{*}(y_{1},\dots,y_{n})$. Assume
that $\delta^{*}(y_{1},\dots,y_{n})=n$. Then the only trace-class
operators $R_{j}\in\mathcal{L}^{1}(L^{2}(M,\tau))$, $j=1,\dots,n$,
that satisfy 
\[
\sum[R_{i},Jy_{i}^{*}J]=0
\]
are $R_{1}=R_{2}=\cdots=R_{n}=0$. 
\end{cor}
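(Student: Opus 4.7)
The plan is to derive the statement directly from the dimension inequalities recorded in the preceding discussion. First, I would note that the conclusion ``$R_{1}=\cdots=R_{n}=0$ whenever $(R_{1},\dots,R_{n})$ is a trace-class tuple satisfying $\sum[R_{i},Jy_{i}^{*}J]=0$'' is equivalent to showing that the kernel subspace
\[
K=\{(R_{1},\dots,R_{n})\in\mathcal{L}^{1}(L^{2}(M))^{n}:\textstyle\sum[R_{i},Jy_{i}^{*}J]=0\}
\]
is trivial. Since $K\subseteq\overline{K}$, where the closure is taken in the Hilbert--Schmidt norm, it suffices to show $\overline{K}=\{0\}$.

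I would then invoke the chain $\delta^{*}(y_{1},\dots,y_{n})\leq\Delta_{1}(y_{1},\dots,y_{n})\leq\Delta(y_{1},\dots,y_{n})\leq n$. The hypothesis $\delta^{*}(y_{1},\dots,y_{n})=n$ forces $\Delta_{1}(y_{1},\dots,y_{n})=n$, which by the definition of $\Delta_{1}$ is exactly the statement $\dim_{M\bar{\otimes}M}\overline{K}=0$. Standard properties of the Murray--von Neumann dimension for normal $M\bar{\otimes}M$-modules then yield $\overline{K}=\{0\}$, hence $K=\{0\}$.

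The main technical point is the inequality $\delta^{*}\leq\Delta_{1}$. The two rightmost inequalities are essentially tautological: $\Delta\leq n$ is immediate from the definition, and $\Delta_{1}\leq\Delta$ follows from the inclusion $FR(L^{2}(M))\subseteq\mathcal{L}^{1}(L^{2}(M))$, which only enlarges the subspace whose closure is taken. The leftmost inequality requires verifying that the proof of \cite[Theorem~4.4]{cs} goes through after replacing $FR$ by $\mathcal{L}^{1}$; the only place where this substitution needs inspection is in \cite[Lemma~4.2]{cs}, and its argument carries over without substantive modification since nothing there used the rank of the operators beyond the trace-class structure needed to pair against derivations. With this verification in hand, the corollary follows at once.
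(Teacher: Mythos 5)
Your argument is correct and follows the paper's own reasoning precisely: you invoke the chain $\delta^{*}\leq\Delta_{1}\leq\Delta\leq n$ (with $\delta^{*}\leq\Delta_{1}$ obtained by adapting \cite[Theorem 4.4]{cs} after swapping $FR$ for $\mathcal{L}^{1}$ in Lemma 4.2), deduce $\Delta_{1}=n$, and use faithfulness of the $M\bar\otimes M$-dimension to conclude that the Hilbert--Schmidt closure of the kernel, and hence the kernel itself, is trivial. This is exactly the derivation given in the paragraph preceding the corollary in the paper.
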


\subsection{Absence of zero divisors and algebraic relations.}
\begin{thm}
\label{thm:noZeroDivisors}Let $y_{1},\dots,y_{n}\in M$ be self-adjoint
elements for which $\delta^{*}(y_{1},\dots,y_{n})=n$. Assume that
for some $P\in\mathbb{C}[X,\dots,X_{n}]$ and some projection $0\neq p\in W^{*}(y_{1},\dots,y_{n})$,
$P(y_{1},\dots,y_{n})\cdot p=0$. Then $P=0$. \end{thm}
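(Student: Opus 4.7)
The plan is to argue by strong induction on the total degree $d$ of $P$. The case $d=0$ is immediate since a nonzero constant cannot annihilate a nonzero projection, so assume the statement for all polynomials of degree less than $d$ and let $P$ have degree $d\geq 1$ with $P(y_1,\ldots,y_n)\,p = 0$ and $p \neq 0$. Tracial finiteness of $M$ (equality of the traces of left and right support projections) produces a nonzero projection $q \in W^*(y_1,\ldots,y_n)$ with $q\,P(y_1,\ldots,y_n) = 0$, so the hypothesis of Lemma \ref{lemma:derivations} is satisfied with $u = q$, $v = p$: indeed $qP(y)\otimes p - q \otimes P(y)p = 0$. Its conclusion is $\sum_i [q T_i p,\,J y_i^* J] = 0$, where $T_i := (\partial_i P)(y_1,\ldots,y_n) \in M \otimes_{\mathrm{alg}} M$.

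Since $P$ is a polynomial, each $T_i$ is a finite sum of simple tensors, and under the standard identification $M \otimes_{\mathrm{alg}} M \hookrightarrow L^2(M) \otimes L^2(M) \cong \mathrm{HS}(L^2(M))$ the element $q T_i p$ becomes a finite-rank operator, hence is trace-class. Corollary \ref{thrm:cs} then forces $q T_i p = 0$ in $\mathcal{L}^1(L^2(M))$, and by injectivity of the embedding the vanishing persists in $M \otimes_{\mathrm{alg}} M$.

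The inductive contradiction is extracted by slicing. Fix $i$ and decompose $\partial_i P = \sum_\alpha A_\alpha \otimes B_\alpha$ in $\mathbb{C}\langle X_1,\ldots,X_n\rangle^{\otimes 2}$ with the $B_\alpha$ chosen linearly independent. Applying the slice map $\tau(c\,\cdot\,) \otimes \mathrm{id}$ for each $c \in M$ to $q T_i p = 0$ yields
\[
R_c(y_1,\ldots,y_n) \cdot p = 0, \qquad R_c := \sum_\alpha \tau\bigl(c\,q\,A_\alpha(y_1,\ldots,y_n)\bigr) B_\alpha,
\]
and $\deg R_c \leq \deg \partial_i P < d$. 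The inductive hypothesis forces $R_c = 0$; linear independence of the $B_\alpha$ then gives $\tau(c\,q\,A_\alpha(y)) = 0$ for every $c \in M$, so by faithfulness of $\tau$ we conclude $q\,A_\alpha(y) = 0$ for every $\alpha$. Since $\deg A_\alpha < d$, a second application of the inductive hypothesis (after converting the left zero divisor $q$ to a right one via tracial finiteness of $M$) yields $A_\alpha = 0$, whence $\partial_i P = 0$ for every $i$. Therefore $P$ is constant, and $P(y)p = 0$ with $p \neq 0$ forces $P = 0$, contradicting $d \geq 1$.

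The principal obstacle is the identification in the second paragraph: one must verify that $q T_i p$, viewed as a finite-rank operator on $L^2(M)$ through the Hilbert--Schmidt identification, really satisfies the commutator relation with $J y_i^* J$ in the form featured in Corollary \ref{thrm:cs}. For polynomial $P$ this is a finite algebraic check once the conventions are unwound, and is essentially baked into the definition of $\Delta_1$; a power-series version of the theorem would however require a genuinely more delicate density/approximation argument to maintain the trace-class property.
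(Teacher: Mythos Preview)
Your argument is correct and follows essentially the same route as the paper: both produce a left annihilator $q$ by traciality, invoke Lemma~\ref{lemma:derivations} and Corollary~\ref{thrm:cs} to obtain $(q\otimes p)\#(\partial_i P)(y)=0$, and then reduce degree by slicing. The only cosmetic difference is in the bookkeeping of the degree reduction: the paper slices once with $(\tau\otimes 1)$ and iterates the operators $\Delta_{j,q}$ to isolate a top-degree coefficient, whereas you slice with all $\tau(c\,\cdot)\otimes\mathrm{id}$ at once, use linear independence of the $B_\alpha$, and invoke the inductive hypothesis twice; both arrive at the same contradiction.
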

\begin{proof}
Assume for contradiction that $P\neq0$; obviously, $P$ is not constant.
We may further assume that $P$ is the smallest degree non-constant
polynomial with the property that $P(y_{1},\dots,y_{n})\cdot p=0$. 

Because $M$ is a tracial von Neumann algebra, the left and right
support projections of $P(y_{1},\dots,y_{n})$ are equal; thus for
some projection $q_{1}$ with the same trace as $p$, $q_{1}P(y_{1},\dots,y_{n})=0$. 

We apply Lemma \ref{lemma:derivations} with $u=q_{1}$, $v=p$ to
conclude that if we set $R_{j}=(u\otimes1+1\otimes v)\#(\partial_{j}P)(y_{1},\dots,y_{n})$,
then $\sum[q_{1}R_{j}p,Jy_{j}^{*}J]=0$. Then Corollary \ref{thrm:cs}
implies that $R_{j}=0$ for all $j$; in other words,
\begin{equation}
(q_{1}\otimes1+1\otimes p)\partial_{j}(P)(y_{1},\dots,y_{n})=0.\label{eq:partialszero-1}
\end{equation}
Let 
\[
\Delta_{j,q_{1}}(P)=(\tau\otimes1)\left((q_{1}\otimes1)\#\partial_{j}(P)\right)\in\mathbb{C}[X_{1},\dots,X_{n}].
\]
Then $\Delta_{j,q_{1}}(P)(y_{1},\dots,y_{n})p=0$, but the degree
of $\Delta_{j,q_{1}}(P)$ is smaller than that of $P$. It follows
that $\Delta_{j,q_{1}}P=0$ or that $P$ is linear. In the former
case, let $X_{i_{1}}\cdots X_{i_{s}}$be a highest-degree term in
$P$ with a nonzero coefficient, $\alpha$. Then applying our proof
recursively, we obtain projections $q_{1},q_{2},\dots,q_{s}$ of trace
equal to $p$, so that $(\Delta_{i_{s},q_{s}}\circ\cdots\circ\Delta_{i_{1},q_{1}})(P)=\alpha\prod\tau(q_{j})=\alpha\tau(p)^{s}$,
contradicting $\Delta_{i_{1},q_{1}}(P)=0$. If $P$ is linear, then
$\Delta_{j,q_{1}}(P)$ does not depend on $X_{1},\dots,X_{n}$ and
so $\Delta_{j,q_{1}}(P)(y_{1},\dots,y_{n})=0$ implies that $P$ is
constant.\end{proof}
\begin{thm}
Let $y_{1},\dots,y_{n}\in M$ be self-adjoint elements satisfying
$\max_{j}\Vert y_{j}\Vert<R$ and $\delta^{*}(y_{1},\dots,y_{n})=n$.
Assume that for some $P\in\mathcal{A}[X_{1},\dots,X_{n};R]$, $P(y_{1},\dots,y_{n})=0$.
Then $P=0$.\end{thm}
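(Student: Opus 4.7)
My strategy is to follow the same pattern as Theorem \ref{thm:noZeroDivisors}: use Lemma \ref{lemma:derivations} together with Corollary \ref{thrm:cs} to conclude that all first-order Voiculescu derivatives of $P$ vanish at $y$, iterate to obtain $(\mathcal{N}^kP)(y)=0$ for every $k\geq 0$, and then extract the homogeneous components of $P$ via an analyticity argument so as to reduce to the polynomial case.

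First, I would apply Lemma \ref{lemma:derivations} with $u=v=1$: since $P(y)=0$, the hypothesis is satisfied trivially, and one obtains $\sum_i[T_i,Jy_i^*J]=0$ with $T_i=(\partial_iP)(y)\in M\hat\otimes M$. The projective tensor product $M\hat\otimes M$ embeds into the trace class $\mathcal{L}^1(L^2(M))$ via $a\otimes b\mapsto(\xi\mapsto\tau(b\xi)a)$ (the trace-class norm being dominated by the projective norm), so Corollary \ref{thrm:cs} forces $T_i=0$ for each $i$. Composing with the continuous multiplication map $m_{y_i}\colon M\hat\otimes M\to M$, $a\otimes b\mapsto ay_ib$, and summing in $i$, one gets $(\mathcal{N}P)(y)=0$. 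Since $\mathcal{N}$ preserves $\mathcal{A}[X_1,\dots,X_n;R]$ (the factor of $d$ on each degree-$d$ summand is absorbed by slightly shrinking the radius from $R'$ to some $R''\in(R,R')$), the same argument applies verbatim to $\mathcal{N}^kP$, and by induction $(\mathcal{N}^k P)(y)=0$ for every $k\geq 0$.

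To conclude, decompose $P=\sum_{d\geq 0}P_d$ into its homogeneous pieces and introduce the $M$-valued function $f(z):=\sum_d z^d P_d(y)$. The series converges absolutely on the disk $|z|<R'/\max_j\|y_j\|$, which strictly contains $z=1$, giving a holomorphic $M$-valued function there. Since $\{d^k\}_{k\geq 0}$ and $\{d(d-1)\cdots(d-k+1)\}_{k\geq 0}$ span the same subspace of polynomials in $d$, the identities $(\mathcal{N}^kP)(y)=\sum_d d^k P_d(y)=0$ translate to $f^{(k)}(1)=0$ for all $k\geq 0$, and analyticity then forces $f\equiv 0$. Reading off the Taylor coefficients at $z=0$ produces $P_d(y)=0$ for every $d$, and since each $P_d$ is a polynomial, Theorem \ref{thm:noZeroDivisors} (applied with $p=1$) gives $P_d=0$, hence $P=0$. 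The main technical point will be justifying the trace-class embedding of $M\hat\otimes M$ and the passage from $T_i=0$ as a trace-class operator to vanishing in $M\hat\otimes M$, so that the continuous multiplication map above may be applied; everything else is a clean iteration together with an elementary analytic separation of the homogeneous components.
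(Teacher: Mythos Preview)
Your proposal is correct and follows essentially the same route as the paper. Both arguments apply Lemma~\ref{lemma:derivations} with $u=v=1$ and Corollary~\ref{thrm:cs} to obtain $(\partial_jP)(y)=0$, iterate to $(\mathcal{N}^kP)(y)=0$, and then separate out the homogeneous components $P_d$ to reduce to Theorem~\ref{thm:noZeroDivisors}. The only cosmetic difference is in that last separation step: the paper packages the iteration into $\phi_t(P)(y)=0$ via \eqref{eq:phi} and then Fourier--inverts against $e^{-2\pi i m t}$ on the circle, whereas you form $f(z)=\sum_d z^dP_d(y)$, kill all derivatives at $z=1$, and invoke analyticity. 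Since $\phi_t(P)(y)=f(e^{2\pi it})$, these are the same analytic fact viewed two ways. Your explicit flagging of the passage from ``$T_i=0$ as a trace-class operator'' to ``$(\mathcal{N}P)(y)=0$ in $M$'' is apt---the paper glosses over the same point---and can be handled by noting that the absolutely convergent series for $(\partial_iP)(y)$ also converges in $M\bar\otimes M^o$, where vanishing in $L^2$ forces vanishing in the algebra and hence after applying $\#\,y_i$.
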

\begin{proof}
Assume for contradiction that $P\neq0$. Obviously, $P$ is not constant. 

We apply Lemma \ref{lemma:derivations} with $u=v=1$: if $R_{j}=(\partial_{j}P)(y_{1},\dots,y_{n})$,
then $\sum[R_{j},Jy_{j}^{*}J]=0$. Thus Corollary \ref{thrm:cs} implies
that $R_{j}=0$ for all $j$; in other words,
\[
\partial_{j}(P)(y_{1},\dots,y_{n})=0
\]
 It follows that also $\mathcal{N}P=\sum_{j}\partial_{j}P\#X_{j}$
satisfies 
\[
\mathcal{N}P(y_{1},\dots,y_{n})=0.
\]
Hence if $\phi_{t}$ is given by (\ref{eq:phi}), then 
\[
\phi_{t}(P)(y_{1},\dots,y_{n})=0.
\]
Thus
\[
\int_{0}^{1}\exp(-2\pi i\ m\ t)\phi_{t}(P)(y_{1},\dots,y_{n})=0,\qquad\forall m>0.
\]
Thus if we write $P^{(m)}$ for the sum of degree $m$ terms of $P$,
then $P^{(m)}(y_{1},\dots,y_{n})=0$, for each $m>0$. But then we
may use Theorem \ref{thm:noZeroDivisors} with $p=1$ to conclude
that $P^{(m)}=0$ for all $m>0$ and that $P=0$.
\end{proof}

\subsection{Further questions.}

Assume now that $z_{1},\dots,z_{m}\in M$ are self-adjoint, but make
no assumptions on whether they generate $M$ or not. Put
\[
\Delta_{M}(z_{1},\dots,z_{m})=m-\dim_{M\bar{\otimes}M^{o}}\overline{\{(T_{1},\dots,T_{m})\in FR(L^{2}(M))^{m}:\sum[T_{j},Jz_{j}^{*}J]=0\}}^{HS}.
\]

\begin{conjecture}
Let $y_{1},\dots,y_{n}\in M$ be self-adjoint elements so that $M=W^{*}(y_{1},\dots,y_{n}).$\\
(a) Let $P_{1},\dots,P_{m}\in\mathbb{C}[X_{1},\dots,X_{n}]$ be polynomials.
Let $z_{j}=P_{j}(y_{1},\dots,y_{n})$, $j=1,\dots,m$. Assume finally
that $M=W^{*}(y_{1},\dots,y_{n}).$ Assume that $\Delta_{M}(y_{1},\dots,y_{n})=n$.
Then exactly one of the following statements holds:
\begin{enumerate}[label={({\roman*})}]
\item $\Delta_{M}(z_{1},\dots,z_{m})\leq m-1$; or
\item $\Delta_{M}(z_{1},\dots,z_{m})=m$.
\end{enumerate}
Moreover, if (b) holds and $\chi^{*}(y_{1},\dots,y_{n})>-\infty$ then
also $\chi^{*}(z_{1},\dots,z_{m})>-\infty$. \\
(b) Let $N\geq1$ be fixed, and let $P_{ij}\in\mathbb{C}[X_{1},\dots,X_{n}]$,
$1\leq i,j,\leq N$. Let $y=(P_{ij}(y_{1},\dots,y_{n}))_{ij}\in M_{N\times N}(M)$,
and assume that $y=y^{*}$. Then any atom of the spectral measure
of $y$ must have normalized trace in the set $\{0,1/N,\dots,N/N\}$.\end{conjecture}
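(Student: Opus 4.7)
The two parts of the conjecture share a common theme: any non-trivial relation among the $z_j$'s (algebraic as in (a), spectral as in (b)) should, via Voiculescu's difference quotients, produce a full $M\bar{\otimes}M^o$-submodule of Hochschild cycles whose $\dim_{M\bar{\otimes}M^o}$ is quantized---jumping from $0$ to at least $1$ in (a), and taking values in $\tfrac{1}{N}\mathbb{Z}$ in (b). The plan is to run the arguments of Lemma~\ref{lemma:derivations} and Theorem~\ref{thm:noZeroDivisors} with matrix coefficients, and then to extract the quantization from the iterated degree-reduction.

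For part (a), the inequality $\Delta_M(z_1,\dots,z_m)\le m$ is automatic, so the content is the gap: if $\Delta_M(z_1,\dots,z_m)<m$ then $\Delta_M(z_1,\dots,z_m)\le m-1$. The chain-rule identity $z_j\otimes 1-1\otimes z_j=\sum_k(\partial_k P_j)(y_1,\dots,y_n)\#(y_k\otimes 1-1\otimes y_k)$ lets one rewrite the Hochschild cycle condition $\sum_j[S_j,Jz_j^*J]=0$ for $(S_1,\dots,S_m)\in\mathcal{L}^1(L^2(M))^m$ as $\sum_k[R_k,Jy_k^*J]=0$ with $R_k=\sum_jS_j\#(\partial_kP_j)(y_1,\dots,y_n)$; Corollary~\ref{thrm:cs} then forces every $R_k$ to vanish. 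The space of cycles for $(z_j)$ is therefore identified with the kernel of the bimodule map defined by the Jacobian $D=\bigl((\partial_kP_j)(y_1,\dots,y_n)\bigr)_{j,k}$, and the gap claim becomes: $\dim_{M\bar{\otimes}M^o}\ker D$ is either $0$ or at least $1$. One approach is to show, using the automorphisms $\phi_t$ of (\ref{eq:phi}) and the Euler-type derivation $\mathcal{N}$, that any non-zero element of $\ker D$ can be promoted into a full unit of $M\bar{\otimes}M^o$-dimension. The $\chi^*$-consequence in case (ii) should follow by pulling back a conjugate system for $(y_1,\dots,y_n)$ through $D$, whose appropriate invertibility is exactly the absence of cycles on the $z$-side.

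For part (b), I would replay Theorem~\ref{thm:noZeroDivisors} inside $M\otimes M_N(\mathbb{C})$. If $\lambda$ is an atom of $y$ with spectral projection $p\in M_N(M)$, polar decomposition produces $q\in M_N(M)$ of equal trace with $q(y-\lambda 1_N)=0$. Writing $y-\lambda 1_N=Q(y_1,\dots,y_n)$ with $Q\in M_N(\mathbb{C}[X_1,\dots,X_n])$ and applying $\operatorname{id}_N\otimes\partial_k$ entrywise, Lemma~\ref{lemma:derivations} produces trace-class Hochschild cycles $(R_k)$ for the $y_k$'s, which Corollary~\ref{thrm:cs} forces to vanish; this in turn yields a matrix-valued analog $\Delta_{k,q}Q$ of strictly smaller degree. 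Iterating until the linear case, the surviving relation is matrix-linear in the $y_k$'s, and the $1/N$-quantization of minimal projections of $M_N(\mathbb{C})$ should then force $\tau_N(p)\in\{0,1/N,\dots,1\}$.

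The main obstacle in both parts is the quantization itself. In Theorem~\ref{thm:noZeroDivisors} the non-vanishing of $\tau(q)^s$ sufficed for a contradiction, but here one must identify the \emph{exact} value of the trace. In (b), the natural route is to diagonalize the surviving matrix relation in a basis adapted to $p$ (or $q$) and exploit the $1/N$-quantization of minimal projections of $M_N(\mathbb{C})$; avoiding circularity is genuinely delicate, since one is trying to deduce a quantization of $\tau_N(p)$ from a structure in which $p$ itself appears. In (a), the parallel hurdle is to rule out fractional values of $\dim_{M\bar{\otimes}M^o}\ker D$, since cyclic submodules of $L^2(M\bar{\otimes}M^o)$ can a priori have any dimension in $(0,1]$; some extra structure---perhaps the closure of $\ker D$ under $\mathcal{N}$ together with the automorphisms $\phi_t$, or a Pimsner--Popa-style comparison argument---must be leveraged to force a full unit of dimension.
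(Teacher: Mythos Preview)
The statement you are attempting to prove is a \emph{conjecture} in the paper, not a theorem: the paper offers no proof whatsoever, and merely remarks that part~(b) is known in the special case where $y_1,\dots,y_n$ form a free semicircular family (citing \cite{shlyakhtenko-skoufranis:atoms}). So there is no ``paper's own proof'' to compare against.

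That said, your outline correctly identifies where the difficulty lies. For (a), your reduction via the chain rule to studying $\ker D$ over $M\bar\otimes M^o$ is the natural first move, but as you yourself note, cyclic submodules of $L^2(M\bar\otimes M^o)$ can have arbitrary Murray--von~Neumann dimension in $(0,1]$, so there is no a priori reason the kernel dimension should be an integer; the $\phi_t$/$\mathcal{N}$ symmetry you invoke acts on the $y$-side, not on the cycle module for the $z$'s, and it is not clear how it would force quantization. For (b), the degree-reduction scheme modeled on Theorem~\ref{thm:noZeroDivisors} runs into the problem that after iterating $\Delta_{i,q}$ down to degree zero you obtain a scalar matrix times a power of $\tau(p)$, which vanishes only if $\tau(p)=0$ or the leading matrix coefficient does---this gives a contradiction to the existence of a \emph{nonzero} atom (recovering absence of atoms in the $N=1$ case), but does not by itself pin down $\tau_N(p)$ to the set $\{0,1/N,\dots,1\}$. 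Your proposed ``diagonalize the surviving linear relation'' step is exactly where the argument is incomplete: the projections $p,q$ live in $M_N(M)$, not $M_N(\mathbb{C})$, so there is no $1/N$-quantization available without already assuming what you want to prove. In short, your sketch is a reasonable plan of attack that isolates the genuine obstruction, but it does not close the gap---which is consistent with the paper leaving the statement as a conjecture.
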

Part \emph{(b)} above is known to hold in the special case that $y_1, \ldots, y_n$ are a free semi-circular family; see \cite{shlyakhtenko-skoufranis:atoms}.

\section{Algebraicity and Free Entropy.}
\label{sec:algent}
\begin{lem}
\label{lem:lpentropy}
Suppose that $y = y^* \in (M, \tau)$ is an element of a von Neumann probability space.
Further assume that the spectral distribution of $y$ is Lebesgue absolutely continuous, with density $f$.
Lastly, suppose that $f \in L^p(\R)$ for some $p > 1$.
Then $\chi(y) > -\infty$.
\end{lem}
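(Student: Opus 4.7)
The plan is to reduce the statement to the classical fact that for a single self-adjoint variable, Voiculescu's microstates free entropy admits an explicit formula in terms of the logarithmic energy of its spectral distribution:
\[
\chi(y) = \iint \log|s - t|\, d\mu(s)\, d\mu(t) + \frac{3}{4} + \frac{1}{2}\log(2\pi),
\]
where $\mu$ is the spectral measure of $y$. Thus it suffices to show that the logarithmic energy $\Sigma(\mu) = \iint \log|s-t|\, d\mu(s)\, d\mu(t)$ is finite under the assumption that $d\mu = f\, ds$ with $f \in L^p(\R)$, $p>1$, and with $\mathrm{supp}(\mu)$ compact.

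Since $y \in M$ is bounded, $f$ is supported in some interval $[-R, R]$, so $\log|s-t| \leq \log(2R)$ on $\mathrm{supp}(f) \times \mathrm{supp}(f)$, which gives the upper bound on $\Sigma(\mu)$ immediately. The only issue is to bound the negative contribution, which comes from the singularity of $\log$ near zero; that is, we must show
\[
\iint_{|s-t| \leq 1} \log \frac{1}{|s-t|} f(s) f(t)\, ds\, dt < \infty.
\]

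To handle this, I would apply H\"older's inequality with conjugate exponent $q = p/(p-1) < \infty$. For each fixed $s$,
\[
\int_{|s-t| \leq 1} \Bigl|\log|s-t|\Bigr| f(t)\, dt \leq \|f\|_p \left(\int_{|s-t|\leq 1} \Bigl|\log|s-t|\Bigr|^q\, dt\right)^{1/q}.
\]
The second factor is bounded uniformly in $s$, because $\int_0^1 |\log u|^q\, du < \infty$ for every finite $q$. Integrating against $f(s)\, ds$, which has total mass $1$, yields a finite upper bound on the negative part of $\Sigma(\mu)$, and hence $\Sigma(\mu) > -\infty$ and $\chi(y) > -\infty$.

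There is no real obstacle here: the argument is a standard potential-theoretic estimate, and the only mild point of care is ensuring that the Voiculescu formula for $\chi$ of a single variable is being used correctly and that the H\"older exponent $q$ is finite, which is exactly why the hypothesis $p > 1$ (rather than $p \geq 1$) is needed.
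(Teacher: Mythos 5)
Your argument is correct, and it takes a somewhat different (and arguably cleaner) route than the paper's. Both proofs start from Voiculescu's formula $\chi(y) = \iint \log|s-t|\,d\mu(s)\,d\mu(t) + C$ and both apply H\"older's inequality in $f$ with conjugate exponent $q = p/(p-1)$. But where you bound the inner integral $\int_{|s-t|\leq 1}|\log|s-t||f(t)\,dt$ uniformly in $s$ by $\|f\|_p\,\bigl(\int_{-1}^1|\log|u||^q\,du\bigr)^{1/q}$ and then simply integrate against the probability density $f(s)\,ds$, the paper instead keeps the expression as $\int f(s)(f\star L)(s)\,ds$, applies H\"older globally to get $\|f\|_p\|f\star L\|_q$, and then controls $\|f\star L\|_q$ by Young's convolution inequality ($\|f\star L\|_q\leq\|f\|_p\|L\|_s$ with $s=q/2$). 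That extra convolution step is what forces the paper to first reduce via interpolation to the case $p<2$ (so that $s>1$); your pointwise-in-$s$ bound needs no such reduction and works directly for every $p>1$. The only thing to be mindful of, which you did implicitly, is that the compact support of $\mu$ handles the upper bound on the logarithmic energy and restricts the singular region to $|s-t|\leq 1$; both proofs rely on this.
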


\begin{proof}
Using interpolation and the fact that $\norm{f}_1 = 1$, we may assume that $p < 2$.
For a single variable, we know by \cite{dvv:entropy1} that the free entropy satisfies
$$
\chi(x) = \iint \log\abs{s-t}\,d\mu(s)\,d\mu(t) + C,
$$
for $C$ constant, so it suffices to bound the integral.
Let $L(t) := \chi_{(-4M, 4M)}\log\abs t$, where $M$ is large enough that the support of $\mu$ is contained in $(-M, M)$.
Then we have for $s \in \R$,
$$
f(s)\int f(t)\log\abs{s-t}\,dt
= f(s)\int f(t)L(s-t)\,dt
= f(s)(f\star L)(s).
$$
We compute:
\begin{align*}
\abs{\iint \log\abs{t-s}\,d\mu(t)\,d\mu(s)}
&= \abs{\int f(s) \int f(t)\log\abs{s-t}\,dt\,ds}\\
&= \abs{\int f(s)(f\star L)(s)\,ds}\\
&\leq \norm{f\cdot(f\star L)}_1\\
&\leq \norm{f}_p\norm{f\star L}_q,
\end{align*}
where $1 = \frac1p+\frac1q$, by H\"older's inequality; note that $q > 2$ since $p < 2$.
It suffices, now, to show that $f\star L \in L^q(\R)$; for this, we appeal to Young's inequality.
Indeed, if $s = \frac q2 > 1$, then $1 + \frac 1q = \frac1p+\frac1s$ and so we have
$$\norm{f \star L}_q \leq \norm{f}_p\norm{L}_s.$$
Yet $\norm{L}_s < \infty$ for any $1 \leq s < \infty$, so we conclude $\abs{\chi(x)} < \infty$.
\end{proof}

Suppose that $y = y^* \in M$ is algebraic, in the sense that its spectral measure $\mu$ has algebraic Cauchy transform.
Assume further that $\mu$ has no atoms.
It was shown in \cite[Theorem 2.9]{anderson-zeitouni:06} that such a measure $\mu$ has density $f$ which fails to exist at at most finitely many points, and for some $d > 0$ and every $a \in \R$ satisfies
$$\lim_{x\to a}(x-a)^{1-d}f(x) < \infty.$$
In particular, if $1 < p < (1-d)^{-1}$, we have $f \in L^p(\R)$ and so Lemma $\ref{lem:lpentropy}$ leads us to conclude $\chi(y) > - \infty$.

\begin{cor}
Assume that $y_1, \ldots, y_n$ are free, algebraic, and $\chi(y_j) > -\infty$ for $1 \leq j \leq n$.
Then if $y = P(y_1, \ldots, y_n)$ with $P$ a non-constant polynomial, $\chi(y) > -\infty$.
\end{cor}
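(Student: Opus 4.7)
The plan is to verify the two hypotheses of the unnumbered discussion preceding the corollary, namely that $y = P(y_1,\dots,y_n)$ is algebraic and has no atoms, and then to invoke that discussion (which chains Anderson--Zeitouni's Theorem 2.9 with Lemma \ref{lem:lpentropy}) to conclude $\chi(y) > -\infty$.

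First, I would handle the absence of atoms. Since $\chi(y_j) > -\infty$, the spectral measure of each $y_j$ has no atoms and in particular $\delta^*(y_j) = 1$. Because the family $y_1,\dots,y_n$ is free, additivity of $\delta^*$ across free independent variables gives $\delta^*(y_1,\dots,y_n) = n$, precisely situation (1) of the list of examples after the first theorem of the paper. Applying that theorem, the spectral measure of $y = P(y_1,\dots,y_n)$ has no atoms (here we take $P$ self-adjoint so that $y = y^*$ and the spectral measure is well-defined).

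Next, I would establish that $y$ has algebraic Cauchy transform. This is where I expect to lean on existing literature: the key input is that the class of distributions with algebraic Cauchy transform is preserved under taking polynomials in free variables. Concretely, each $y_j$ individually has algebraic Cauchy transform by hypothesis, and by the results of Anderson and Zeitouni (see the discussion around \cite{anderson-zeitouni:06} and its precursors), algebraicity propagates through free additive convolution, free multiplicative convolution, and more generally through the operator-valued resolvents produced by the linearization trick applied to polynomials of free algebraic variables. Thus the Cauchy transform of $y$ is algebraic.

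With both inputs in hand, the paragraph preceding the corollary applies verbatim: Theorem 2.9 of \cite{anderson-zeitouni:06} furnishes an exponent $d > 0$ such that the density $f$ of the spectral measure of $y$ satisfies $\lim_{x \to a} (x-a)^{1-d} f(x) < \infty$ for every $a \in \mathbb{R}$, with $f$ defined except at finitely many points. Choosing $1 < p < (1-d)^{-1}$ (or any $p > 1$ if $d \geq 1$) gives $f \in L^p(\mathbb{R})$, and Lemma \ref{lem:lpentropy} yields $\chi(y) > -\infty$. The main obstacle, as noted, is the algebraicity step: one must either quote a suitable closure property for algebraic Cauchy transforms under non-commutative polynomial evaluation in free variables, or build it by hand via the linearization trick and the fact that the operator-valued Cauchy transform of a self-adjoint pencil with algebraic scalar Cauchy transforms on each slot is algebraic. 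Everything else is a direct invocation of results already established in the excerpt.
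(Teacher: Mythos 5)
Your overall strategy coincides with the paper's: establish that $y$ has no atoms, establish that $y$ is algebraic, then feed both into the discussion preceding the corollary (Anderson--Zeitouni's Theorem 2.9 plus Lemma \ref{lem:lpentropy}). The no-atoms step is handled correctly: $\chi(y_j) > -\infty$ gives $\delta^*(y_j)=1$, freeness gives $\delta^*(y_1,\dots,y_n)=n$, and Theorem \ref{thm:noZeroDivisors} (equivalently, the unnamed theorem from the introduction) then rules out atoms of $P(y_1,\dots,y_n)$.

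The gap is the algebraicity step, and you flag it yourself. The paper does not prove it; it cites Anderson's \cite{anderson:14} \emph{Preservation of algebraicity in free probability}, whose content is precisely that the algebraicity of the Cauchy transforms of $y_1,\dots,y_n$, together with freeness, forces the Cauchy transform of any non-commutative polynomial in them to be algebraic. Your write-up instead points to Anderson--Zeitouni \cite{anderson-zeitouni:06}, but Theorem 2.9 of that paper is only the density estimate for a \emph{given} algebraic measure; it contains no closure property under polynomial evaluation in free variables. The linearization sketch you give is in the right spirit (it is essentially how \cite{anderson:14} proceeds) but is not carried out, so as written the algebraicity of $y$ is asserted, not established. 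Replacing your hedged reference with a direct citation to \cite{anderson:14} closes the gap and makes your proof agree with the paper's.
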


\begin{proof}
It was shown by Anderson in \cite{anderson:14} that the freeness and algebraicity of $y_1, \ldots, y_n$ is enough to guarantee that $y$ is algebraic.
Freeness and the fact that $\chi(y_j) > -\infty$ together with Theorem \ref{thm:noZeroDivisors} allow us to conclude that the spectral measure of $y$ has no atoms.
Following the above discussion, we conclude $\chi(y) > -\infty$.
\end{proof}

It is tempting to conjecture that if $y_1,\dots,y_n$ satisfy $\chi (y_1,\dots,y_n)>-\infty$ (where we use Voiculescu's microstates free entropy, \cite{dvv:entropy2}) and $y=p(y_1,\dots,y_n)$ is a non-constant polynomial, then also $\chi(y)>-\infty$.  We point out that this is true assuming that $p$ is sufficiently close to a polynomial of degree $1$.  For a non-commutative polynomial $p$ let us write $a_{i_1,\dots,i_n}$ for the coefficient of the monomial $y_{i_1}\cdots y_{i_n}$ in $p$:

\begin{proposition}
Assume that $\chi(y_1,\dots,y_n)>-\infty$.  Fix $d$ and $n$.  Then there exists an $\epsilon>0$ so that if  $p(y_1,\dots,y_n)$ is a self-adjoint polynomial of degree $d$ for which $$\sum_{k\geq 2} \sum_{i_1,\dots,i_k=1}^n | a_{i_1,\dots,i_k} |^2 < \epsilon \sum_{j} |a_j|^2$$ and $y=p(y_1,\dots,y_n)$, then $$\chi(y)>-\infty.$$
\end{proposition}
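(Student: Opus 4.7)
The plan is to realise $y = p(y_1,\dots,y_n)$ as the first coordinate of a polynomial change of variables of $(y_1,\dots,y_n)$ that, for $\epsilon$ small, is a mild perturbation of an invertible linear map. A matricial Jacobian estimate then transfers the finiteness of $\chi$ from the $n$-tuple to the new first coordinate, and a standard marginal inequality recovers $\chi(y) > -\infty$. Decompose $p = c + L + Q$ with $c$ constant (which does not affect $\chi(y)$), $L(X_1,\dots,X_n) = \sum_j a_j X_j$ linear with $a_j \in \R$ (by self-adjointness), and $Q$ self-adjoint of degree between $2$ and $d$. The hypothesis gives $\norm{Q}_2 < \sqrt{\epsilon}\,\norm{L}_2$ in the coefficient $\ell^2$-norm; after relabelling we may assume $|a_1| = \max_j|a_j|$, so $|a_1|^2 \geq \frac{1}{n}\norm{L}_2^2$ and hence $\norm{Q}_2 < \sqrt{\epsilon n}\,|a_1|$. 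Fix $R > \max_j\norm{y_j}$ and consider
\[
F(X_1,\dots,X_n) = \paren{p(X_1,\dots,X_n),\, X_2,\,\dots,\,X_n}.
\]

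For self-adjoint matrices $(Y_2,\dots,Y_n)$ and $Z$ of operator norm at most $R$, the equation $p(Y_1,Y_2,\dots,Y_n)=Z$ rearranges to
\[
Y_1 = a_1^{-1}\bigl(Z - c - \textstyle\sum_{j \geq 2} a_j Y_j - Q(Y_1,Y_2,\dots,Y_n)\bigr),
\]
whose right-hand side is, as a function of $Y_1$, of Lipschitz constant at most $\sqrt{\epsilon n}\,d\,R^{d-1}$. For $\epsilon$ sufficiently small (depending only on $d$, $n$, $R$) this is strictly less than $\frac{1}{2}$, and the Banach fixed-point theorem furnishes a unique self-adjoint $Y_1 = g(Z,Y_2,\dots,Y_n)$ solving the equation, so $F$ is injective on the Hermitian ball of radius $R$. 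Its Jacobian on $(M_N^{sa})^n$ is block triangular with diagonal block $\partial_1 p|_Y$, acting on $M_N^{sa}$ via $A \otimes B \cdot X = AXB$. Since $\partial_1 p = a_1(1 \otimes 1) + \partial_1 Q$ and $\norm{\partial_1 Q|_Y}_{\mathrm{op}} \leq \norm{Q}_2 \cdot C(d,R)$, for $\epsilon$ small this is an invertible perturbation of $a_1\cdot\mathrm{id}$, whence $|\det(\partial_1 p|_Y)|^{1/N^2}$ is uniformly close to $|a_1|$.

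Since $F$ is polynomial, it carries matricial microstates for $(y_1,\dots,y_n)$ to microstates for $(y,y_2,\dots,y_n)$; combining this with injectivity and the uniform Jacobian lower bound, the change-of-variables formula for Lebesgue measure on $(M_N^{sa})^n$ yields
\[
\chi(y,y_2,\dots,y_n) \geq \chi(y_1,\dots,y_n) + \log|a_1| - C > -\infty.
\]
A standard Fubini argument bounding the fibres of the projection onto the first coordinate by the volume of a fixed Hermitian ball (the normalization factor $\frac{n-1}{2}\log N$ in Voiculescu's definition of $\chi$ absorbing the bulk of the dimensional correction) then produces $\chi(y) \geq \chi(y,y_2,\dots,y_n) - C' > -\infty$. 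The principal obstacle is the uniform perturbative analysis of $F$ on the Hermitian balls containing the matricial microstates: one must verify that the contraction-mapping inversion really remains inside the ball and that $\norm{\partial_1 Q|_Y}_{\mathrm{op}}$ admits a bound by the coefficient norm of $Q$ that is uniform in $N$. Once the spectral truncation to operator norm $\leq R$ is installed as in Voiculescu's original microstates calculus, both steps reduce to routine perturbation estimates on polynomial maps with explicitly controlled coefficients.
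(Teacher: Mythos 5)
Your proof follows essentially the same route as the paper's: normalize away the constant term, view $y$ as the first component of the polynomial map $F = (p, X_2, \ldots, X_n)$, which for $\epsilon$ small is an invertible perturbation of a linear map, deduce $\chi(y, y_2, \ldots, y_n) > -\infty$ via change of variables, and then recover $\chi(y) > -\infty$ from a marginal/subadditivity bound. The only real difference is presentational: the paper first applies an orthogonal change of coordinates so the linear part becomes $\sqrt{\delta}\,X_1$ and then cites the implicit function theorem for non-commutative power series and Voiculescu's change-of-variables formula for $\chi$, whereas you unfold these citations into hand-done contraction-mapping, Jacobian, and Fubini arguments.
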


\begin{proof}
We note first that we can assume that $p$ has no constant term, since this has no effect on the free entropy of $y$.  Let $\delta = \sum_j |a_j|^2$.  We note that by Voiculescu's change of variable formula, $\chi(y_1,\dots,y_n)$ is unchanged if we replace $y_j$ by $\sum_i T_{ij} y_i$, where $(T_{ij})$ is any orthogonal matrix.  We may thus assume that $a_{j}=0$ unless $j=1$ and that $|a_1|^2 = \delta$.  By multiplying $p$ by $\delta^{-1}$ (which changes the free entropy of $y$ by an finite additive constant) we may assume that $a_1 = 1$ and $\sum_{i_1,\dots,i_k=1}^n | a_{i_1,\dots,i_k} |^2 < \epsilon$.

Next, we set $q_1(y_1,\dots,y_n) = p$ and let $q_j (y_1,\dots,y_n) = y_j$ for $j=2,\dots,n$.  Then by the  implicit function theorem for non-commutative power series (see e.g. \cite{guionnet-shlyakhtenko:transport}) we know that as long as $\epsilon$ is sufficiently small, there exist non-commutative power series $f_1,\dots,f_n$ so that 
\begin{align*}
t_j & = q_j (f_1(t_1,\dots,t_n),\dots,f_n(t_1,\dots,t_n))  \\ s_j &=  f_j (q_1(s_1,\dots,s_n),\dots,q_n(s_1,\dots,s_n)) 
\end{align*} for all $j$ and any operators $t_1,\dots,t_n$ and $s_1,\dots,s_n$ of norm at most $1.1 \max_{j} \Vert y_j \Vert$.  Voiculescu's change of variables formula for free entropy \cite{dvv:entropy2} implies that $$\chi(q_1(y_1,\dots,y_n),\dots,q_n(y_1,\dots,y_n))>-\infty.$$  But then $$
-\infty < \chi(y,y_2,\dots,y_n) \leq \chi(y) + \chi(y_2) + \cdots + \chi(y_n) $$ which implies that $\chi(y)>-\infty$.
\end{proof}

\section{Properties of Spectral Measures.}
\subsection{Non-singularity.}
For $A$ an algebra, we denote the flip operation on $A$ as follows: given $a, b \in A$, set $(a\otimes b)^\sigma = b\otimes a$, and extend this linearly to $A \otimes A$.
We also maintain the notation $(a\otimes b)\# z = azb$ for $a, b, z \in A$.

\begin{lem}
\label{lemma:sing-commute}
Let $x\in B(H)$ be a self-adjoint operator, and assume that the spectral measure of $x$ is not Lebesgue absolutely continuous.  Then there exists a sequence $T_n$ of finite-rank operators satisfying the following properties: (i) $0\leq T_n \leq 1$; (ii) for a nonzero spectral projection $p$ of $x$, $T_n \to p$ weakly; (iii) $\Vert [T_n,x]\Vert_1 \to 0$.  Moreover, if the absolutely continuous component of the spectral measure of $x$ is zero, then $p$ can be chosen to be $1$.
\end{lem}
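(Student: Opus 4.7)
The plan is to take $p$ to be the spectral projection of $x$ onto the singular subspace (relative to Lebesgue measure on $\R$). This is a Borel spectral projection of $x$, is nonzero by hypothesis, and equals $1$ precisely when the absolutely continuous part of the spectral measure vanishes, which handles the ``moreover'' claim. Since $p$ commutes with $x$ we have $x = x_s \oplus x_{ac}$ on $pH \oplus (1-p)H$, with $x_s$ of purely singular spectrum. If finite-rank $S_n \leq p$ can be produced on $pH$ with $S_n \to I_{pH}$ weakly and $\norm{[S_n, x_s]}_1 \to 0$, then $T_n := S_n \oplus 0$ satisfies all three conditions for the original $x$. Thus the problem reduces to the case where the spectral measure of $x$ is purely singular, with target $T_n \to 1$.

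In this reduced setting fix a countable dense sequence $\set{\xi_m} \subset H$. Singularity means there is a Lebesgue-null Borel set carrying the whole spectral measure, so by outer regularity of Lebesgue measure there is, for each $n$, an open set $U_n \subset \R$ with $\abs{U_n} < n^{-3}$ and $E_x(\R \setminus U_n) = 0$. Write $U_n = \bigsqcup_{j \geq 1} I_{n,j}$ as a disjoint union of open intervals, let $p_{n,j} := E_x(I_{n,j})$, and pick $J_n$ large enough that $\norm{(I - \sum_{j \leq J_n} p_{n,j}) \xi_m} < 1/n$ for all $m \leq n$; this is possible since $\sum_{j \geq 1} p_{n,j} = E_x(U_n) = I$ in the strong operator topology. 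For each $j \leq J_n$, let $q_{n,j}$ be the orthogonal projection onto $\mathrm{span}\set{p_{n,j} \xi_m : 1 \leq m \leq n}$, a subspace of $p_{n,j} H$ of dimension at most $n$, and set $T_n := \sum_{j \leq J_n} q_{n,j}$.

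Properties (i) and (ii) follow quickly. The $p_{n,j}$ are pairwise orthogonal, hence so are the $q_{n,j}$, and $T_n$ is a finite-rank projection. For $m \leq n$, each vector $p_{n,j}\xi_m$ lies in the defining span of $q_{n,j}$, so $T_n \xi_m = \sum_{j \leq J_n} p_{n,j} \xi_m$, which tends to $\xi_m$ by the choice of $J_n$; density of $\set{\xi_m}$ and $\norm{T_n} \leq 1$ upgrade this to $T_n \to 1$ strongly, and hence weakly. For (iii), letting $c_{n,j}$ be the midpoint of $I_{n,j}$ and using $q_{n,j} \leq p_{n,j}$ together with $[p_{n,j}, x] = 0$, one computes $[q_{n,j}, x] = [q_{n,j}, (x - c_{n,j})p_{n,j}]$. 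Applying the H\"older-type bound $\norm{[A,B]}_1 \leq 2\norm{A}_1 \norm{B}_\infty$ with $\norm{(x - c_{n,j})p_{n,j}}_\infty \leq \abs{I_{n,j}}/2$ and $\norm{q_{n,j}}_1 \leq n$ gives $\norm{[q_{n,j}, x]}_1 \leq n \abs{I_{n,j}}$, and summing yields $\norm{[T_n, x]}_1 \leq n \abs{U_n} < n^{-2} \to 0$.

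The main obstacle is balancing two competing demands: trace-norm smallness of the commutator pushes toward using the spectral projections $E_x(I_{n,j})$ (which exactly commute with $x$) but these are generally infinite rank, while approximating $p$ weakly requires capturing genuine bulk of $pH$. The truncations $q_{n,j}$ adapted to an initial segment of a dense sequence resolve this: their ranks grow only mildly (bounded by $n$) while the interval lengths $\abs{I_{n,j}}$ sum to $n^{-3}$, so the product bound still tends to zero. This works precisely because singularity allows simultaneous control of the Lebesgue size of $U_n$ and the vanishing of $E_x$ outside $U_n$; the absolutely continuous part cannot be pushed into a set of arbitrarily small Lebesgue measure, which is why the reduction to $pH$ is essential.
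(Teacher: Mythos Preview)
Your proof is correct and self-contained, whereas the paper's proof is a two-line appeal to the literature: it reduces to the purely singular case exactly as you do, and then invokes Voiculescu's quasicentral approximate unit result (Theorem~4.5 of \cite{dvv:norm-ideal-perturb}, going back to Kato) to produce the $T_n$. What you have written is, in effect, a direct proof of the relevant special case of that theorem for a single self-adjoint operator with singular spectrum: covering the null support by short intervals, truncating the associated spectral projections to finite rank along a dense sequence, and balancing the rank growth against the total length of the cover. The payoff of your approach is that it is elementary and makes the mechanism transparent; the paper's citation is shorter and places the fact in its natural generality (the obstruction $k_{\mathscr{C}_1}$ vanishes exactly on the singular part). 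One small point: your construction tacitly assumes $H$ is separable (you fix a countable dense sequence, and you need a single Lebesgue-null Borel set carrying the whole projection-valued measure); this is harmless in the paper's setting $H = L^2(M)$ but is worth stating.
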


\begin{proof}
Let $p$ be a nonzero spectral projection of $x$ corresponding to a subset of $\mathbb R$ having Lebesgue measure zero.  By replacing $x$ with $pxp$, we may assume that $x$ has singular spectral measure.  In this case, by a result of Voiculescu (see Theorem 4.5 in \cite{dvv:norm-ideal-perturb}; note that as remarked at the bottom of page 5 of that article, $\mathscr{C}_1^- = \mathscr{C}_1$ and so $k_1^- = k_1$; in this particular case the result essentially goes back to Kato \cite{kato:perturb}), there exist $0\leq T_n \leq 1_{pH}$ with the property that $T_n \to 1_{pH}$ weakly in $B(pH)$ and $\Vert [x,T_n]\Vert_1 \to 0$.
\end{proof}

\begin{lem}
\label{lemma:zero-der}
Let $\operatorname{Alg}(y_1, \ldots, y_n) = A$ be a $*$-algebra with $y_i = y_i^*$, $y \in A$ a polynomial, and $\tau$ a faithful trace on $A$.
Suppose that $y_1, \ldots, y_n$ are algebraically free, and for each $1\leq i \leq n$ let $\partial_i: A \to A\otimes A$ be the derivation given by $\partial_i(y_j) = \delta_{j=i}1\otimes1$.
Then $y \in \operatorname{Alg}(y_1, \ldots, \hat y_i, \ldots, y_n)$ if and only if
$$(\partial_i y)^\sigma\#y^* = 0.$$
Moreover, if $(\partial_iy)^\sigma\#y^* = 0$ for every $1 \leq i \leq n$, then $y \in \C$.
\end{lem}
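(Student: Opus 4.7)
My plan is to exploit the $y_i$-degree grading on $A$: by algebraic freeness, $A$ is isomorphic to the free algebra $\mathbb{C}\langle X_1,\dots,X_n\rangle$, so there is a well-defined direct-sum decomposition $A = \bigoplus_{k\geq 0} A^{(k)}$ where $A^{(k)}$ is spanned by monomials containing exactly $k$ occurrences of $y_i$. Write $y = \sum_k y^{(k)}$ accordingly. The key point is that the three ingredients $\partial_i$, $(\cdot)^\sigma$, and $\#$ all interact cleanly with this grading: $\partial_i$ shifts it by $-1$ (splitting the remaining $y_i$'s between the two tensor factors), $\sigma$ swaps the tensor factors, $\#$ adds degrees, and $*$ preserves it because $y_i = y_i^*$. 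Hence $(\partial_i y^{(k)})^\sigma \# (y^{(l)})^* \in A^{(k+l-1)}$, and the hypothesized identity $(\partial_i y)^\sigma \# y^* = 0$ splits into one equation in each graded piece.

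The ``only if'' direction is then immediate, since $y \in \operatorname{Alg}(y_1,\dots,\hat y_i,\dots,y_n) = A^{(0)}$ forces $\partial_i y = 0$. For the ``if'' direction I argue by contradiction: let $K$ be the largest $k$ with $y^{(k)} \neq 0$, and suppose $K \geq 1$. The component of $(\partial_i y)^\sigma \# y^*$ lying in $A^{(2K-1)}$ comes only from the pair $(k,l) = (K,K)$, so it reduces to the identity $(\partial_i y^{(K)})^\sigma \# (y^{(K)})^* = 0$.

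Next I test this identity against $y_i$ under the trace. Using cyclicity, the quantity $\tau\bigl(y_i\cdot (\partial_i y^{(K)})^\sigma \# (y^{(K)})^*\bigr)$ rearranges to $\tau\bigl((\partial_i y^{(K)} \# y_i)\cdot (y^{(K)})^*\bigr)$. A one-line computation on monomials yields $\partial_i(m)\#y_i = k_m \cdot m$ for any monomial $m$ with $k_m$ copies of $y_i$ (each of the $k_m$ terms in $\partial_i m$ simply reinserts $y_i$ back where it came from), so $\partial_i y^{(K)} \# y_i = K\,y^{(K)}$. The expression thus equals $K\,\tau(y^{(K)}(y^{(K)})^*)$, which by faithfulness of $\tau$ forces $y^{(K)} = 0$, contradicting the choice of $K$. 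So $y \in A^{(0)}$, completing the ``if'' direction.

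For the moreover statement, applying the main equivalence for each $i$ in turn places $y$ in $\bigcap_i \operatorname{Alg}(y_1,\dots,\hat y_i,\dots,y_n)$, and algebraic freeness makes this intersection $\mathbb{C}$ (the only monomial avoiding every generator is the empty one). The only step that really needs care is bookkeeping the $y_i$-grading through $\partial_i$, $\sigma$, $\#$, and $*$ simultaneously; once that is in place, the pairing with $y_i$ under the trace and the identity $\partial_i y^{(K)} \# y_i = K\,y^{(K)}$ form a routine cyclic-trace manipulation.
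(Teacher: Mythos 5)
Your proof is correct and follows essentially the same approach as the paper's. The paper extracts the top $y_i$-degree contribution by applying the scaling homomorphism $\varphi_\lambda(y_i)=\lambda y_i$ and reading off the coefficient of $\lambda^{2d}$, while you work directly with the $y_i$-degree grading $A=\bigoplus_k A^{(k)}$ and isolate the $A^{(2K-1)}$ component; these are the same bookkeeping device in two guises, and the core computation — multiply by $y_i$, use trace cyclicity to pass from $(\partial_i\cdot)^\sigma\#$ to $(\partial_i\cdot)\#y_i$, invoke the number-operator identity $\partial_i(m)\#y_i=k_m m$, and apply faithfulness of $\tau$ — is identical to the paper's.
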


\begin{proof}
One direction is immediate as $\operatorname{Alg}(y_1, \ldots, \hat y_i, \ldots, y_n) \subseteq \ker\partial_i$.

Let $\mathcal{N}_i : A \to A$ be the number operator associated to $y_i$, the linear map which multiplies each monomial by its $y_i$-degree.
Observe that
$$(\partial_i y)\# y_i = \mathcal{N}_i(y),$$
as each monomial $m$ in $y$ contributes $\sum_{m = ay_ib} (a\otimes b)\# y_i = \mathcal{N}_i(m)$.

Suppose, then, that $(\partial_i y)^\sigma\#y^* = 0$, so $y_i(\partial_i y)^\sigma\#y^* = 0$ as well.
Let $\varphi_\lambda : A \to A$ be the algebra homomorphism given by $\varphi_\lambda(y_i) = \lambda y_i$, $\varphi_\lambda(y_j) = y_j$ for $j \neq i$, which exists as $y_1, \ldots, y_n$ are algebraically free.
We compute the following:
$$
0
= \tau\circ\varphi_\lambda(0)
= \tau\circ\varphi_\lambda\left(y_i(\partial_i y)^\sigma\#y^*\right)
= \tau\circ\varphi_\lambda\left(y^*(\partial_i y)\# y_i\right)
= \tau\circ\varphi_\lambda\left(y^* \mathcal{N}_i(y)\right).
$$

Now, suppose $\deg_{y_i}(y) = d$, and take $x, z \in A$ so that $x$ is $y_i$-homogeneous of degree $d$, $\deg_{y_i}(z) < d$, and $y = x + z$.
Then:
$$
0
= \tau\circ\varphi_\lambda\left( y^*\mathcal{N}_i(y) \right)
= \tau\circ\varphi_\lambda(dx^*x) + \tau\circ\varphi_\lambda(z^*\mathcal{N}_i(y) + x^*\mathcal{N}_i(z))
= d\lambda^{2d}\tau(x^*x) + O(\lambda^{2d-1}).
$$
Thus $d\lambda^{2d}\tau(x^*x) = 0$, and as $\tau$ is faithful, either $x = 0$ (in which case $y = 0$) or $d = 0$ (in which case $\deg_{y_i}(y) = 0$).
Either way, $y \in \operatorname{Alg}(y_1, \ldots, \hat y_i, \ldots, y_n)$.

Repeated application of the above yields the final claim.
\end{proof}

\begin{thm}
\label{prop:nonsing}
Let $y=y^* \in \operatorname{Alg}(y_1,\dots,y_n) \subset W^*(y_1,\dots,y_m)=M$ be a non-constant polynomial in a $W^*$-probability space and assume that there exists a dual system to $y_1,\dots,y_m$, i.e., operators $R_j\in B(L^2(M))$, $1\leq j\leq m$ so that $[R_j,y_k]=\delta_{j=k} P_1$, where $P_1$ is the orthogonal projection onto $1\in L^2(M)$.
Then the spectral measure of $y$ is not singular with respect to Lebesgue measure.
\end{thm}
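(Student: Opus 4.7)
Assume for contradiction that the spectral measure of $y$ is singular with respect to Lebesgue measure. Since its absolutely continuous component then vanishes, the ``moreover'' clause of Lemma \ref{lemma:sing-commute} supplies a sequence $T_n$ of finite-rank operators with $0\leq T_n\leq 1$, $T_n \to 1$ weakly, and $\norm{[T_n, y]}_1 \to 0$. The existence of a dual system forces $\Phi^*(y_1,\dots,y_m) < \infty$, and in particular that $y_1,\dots,y_m$ are algebraically free, so that the hypothesis of Lemma \ref{lemma:zero-der} applies to $y_1,\dots,y_n$.

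The argument will rest on two Leibniz-type identities,
\[
[T_n, y] = \sum_{i=1}^n (\partial_i y)(y_1,\dots,y_n) \# [T_n, y_i], \qquad [R_i, y] = (\partial_i y)(y_1,\dots,y_n) \# P_1,
\]
the second of which is immediate from $[R_i, y_k]=\delta_{ik}P_1$ combined with the derivation property of the commutator.  The crucial feature of the second identity is that $[R_i, y]$ is actually \emph{finite-rank}, because $P_1$ is; consequently, for any $c \in \set{y}'$ the operator
\[
[R_i c, y] = (\partial_i y \# P_1)\,c + R_i[c,y] = (\partial_i y \# P_1)\,c
\]
is again finite-rank, since the ``bad'' term $R_i[c,y]$ drops out.

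Pairing $\norm{[T_n, y]}_1 \to 0$ against the bounded operator $R_i c$ gives $\operatorname{Tr}(R_i c\,[T_n, y]) \to 0$, and a standard cyclic rearrangement rewrites this as $\operatorname{Tr}((\partial_i y \# P_1)\cdot c\cdot T_n) \to 0$; on the other hand, as $(\partial_i y \# P_1)c$ is finite-rank and $T_n \to 1$ weakly, the same expression converges to $\operatorname{Tr}((\partial_i y \# P_1)\cdot c)$, producing an explicit scalar identity.  Running this systematically over a sufficient family of test operators --- built from products of $R_j$'s and elements of $W^*(y)$, and using $y=y^*$ so that the offending commutators $[y,y^*]=0$ vanish --- assembles enough such identities to deduce the element-level vanishing $(\partial_i y)^\sigma \# y^* = 0$ in $M$ for every $i$.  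Lemma \ref{lemma:zero-der} then forces $y \in \C$, contradicting non-constancy.

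The main obstacle I anticipate is the last step: upgrading the family of scalar identities to the element-wise relation $(\partial_i y)^\sigma \# y^* = 0$.  This requires the test operators to probe enough tensor components of $\partial_i y$, and careful bookkeeping to ensure that commutator terms outside $\set{y}'$ cancel in the limit.  Self-adjointness of $y$ and the algebraic freeness afforded by the dual system are the essential features that make this bookkeeping feasible.
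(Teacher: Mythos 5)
Your proposal correctly identifies all the essential ingredients — the dual system making $[R_i,y]=(\partial_i y)\#P_1$ finite-rank, the pairing of the vanishing trace norm $\norm{[T_n,y]}_1$ against a bounded test operator, the passage to the weak limit using that the relevant operator is finite-rank, and the appeal to Lemma \ref{lemma:zero-der}. It also supplies a useful observation the paper leaves implicit, namely that the dual system guarantees the algebraic freeness needed to invoke Lemma \ref{lemma:zero-der}. But the argument is not complete: you explicitly flag the last step, extracting the operator-level identity $(\partial_i y)^\sigma\#y=0$ from a family of scalar identities, as an unresolved obstacle, and the test family you sketch does not obviously produce it.

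The gap is real but much smaller than you fear, and needs no ``products of $R_j$'s'' or delicate bookkeeping. The single test operator $Jx^*J\,R_i\,y$, for arbitrary $x\in M$, does the whole job in one stroke: since $Jx^*J\in M'$ commutes with every element of $M$ (not merely with $y$), a cyclic rearrangement of the trace plus this commutation gives
\[
\operatorname{Tr}\bigl(Jx^*J\,R_i\,y\,[T_n,y]\bigr)=\operatorname{Tr}\bigl(Jx^*J\,[y,R_i]\,y\,T_n\bigr)=-\operatorname{Tr}\bigl(Jx^*J\,((\partial_i y)\#P_1)\,y\,T_n\bigr),
\]
and the weak limit $T_n\to1$, legitimate because $(\partial_i y)\#P_1$ is finite-rank, reduces the right side to $-\operatorname{Tr}\bigl(Jx^*J\,P_1\,((\partial_i y)^\sigma\#y)\bigr)=-\tau\bigl(((\partial_i y)^\sigma\#y)\,x\bigr)$. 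Letting $x$ range over $M$ and using faithfulness of $\tau$ yields $(\partial_i y)^\sigma\#y=0$ directly. In your framework this amounts to the specific choice $c=Jx^*J\,y\in\set{y}'$ (the full commutant of $M$ being a more useful source of test operators than $\set{y}'$ alone, because $Jx^*J$ commutes with every leg of $\partial_i y$); the remark about $[y,y^*]=0$ and the first Leibniz identity are not needed. Had you committed to this choice and carried out the computation, your argument would coincide with the paper's.
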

\begin{proof}
Assume to the contrary that the spectral measure of $y$ is singular with respect to Lebesgue measure.
By Lemma \ref{lemma:sing-commute}, we may choose $0\leq T_n \leq 1$ finite rank operators with $T_n\to 1$ weakly and $\Vert [T_n,y]\Vert_1 \to 0$.  


Fix $x \in M$.
We compute as follows:
\begin{align*}
0
&= \lim_{n\to\infty} \Vert Jx^*J R_i y \Vert_\infty \Vert [T_n, y] \Vert_1 \\
&\geq \lim_{n\to\infty} \Big| \operatorname{Tr}( Jx^*J R_i y [T_n, y]) \Big| \\
&= \lim_{n\to\infty} \Big| \operatorname{Tr}(Jx^*J [y, R_i] yT_n) \Big| \\
&= \lim_{n\to\infty} \Big| \operatorname{Tr}(Jx^*J ((\partial_i y)\# P_1) yT_n) \Big| \\
&= \Big| \operatorname{Tr}( Jx^*J P_1 ((\partial_i y)^\sigma \# y) ) \Big| \\
&= \Big| \tau(((\partial_i y)^\sigma\#y)x) \Big|
\end{align*}
Here we are able to pass to the limit of $T_n$ as $P_1$ is finite rank.

We conclude that $(\partial_i y)^\sigma\#y = 0$ as $\tau$ is faithful and as $x$ was arbitrary.
From Lemma \ref{lemma:zero-der}, we see that $y$ is constant.
\end{proof}

For certain polynomials, we can make a stronger statement: that the spectral measure is in fact Lebesgue absolutely continuous.
To do so, we will need the following modification of Lemma \ref{lemma:zero-der}.

\begin{lem}
\label{lemma:zero-der2}
Let $y = y^* \in \operatorname{Alg}(y_1, \ldots, y_n) \subset W^*(y_1, \ldots, y_m) = M$ be a polynomial in a tracial von Neumann probability space, suppose $y_1, \ldots, y_m$ are algebraically free and have full free entropy dimension, and take $\partial_i : A \to A\otimes A$ as before.
Fix $1\leq i \leq n$, and assume that for some $p \in \mathcal{P}(W^*(y))$ a non-zero spectral projection of $y$, we have $$(\partial_i y)^\sigma \# (py) = 0.$$
Finally, suppose that $y$ is an eigenvector of the number operator $\mathcal{N}_i$, i.e., $\mathcal{N}_i(y) = \partial_i y \# y_i = \lambda y$.
Then $\lambda = 0$ and $y \in \operatorname{Alg}(y_1, \ldots, \hat y_i, \ldots, y_n)$.

Moreover, if $(\partial_i y)^\sigma \# (py) = 0$ for each $1 \leq i \leq n$ and $y$ is an eigenvector of each $\mathcal{N}_1, \ldots, \mathcal{N}_n$, then $y \in \C$.
\end{lem}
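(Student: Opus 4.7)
The proof closely parallels that of Lemma \ref{lemma:zero-der}, but with the eigenvector hypothesis $\mathcal{N}_i(y) = \lambda y$ playing the role of the rescaling $\varphi_\lambda$ used there, and with the projection $p$ allowing us to extract a Hilbert--Schmidt norm in place of a second copy of $y$. The plan is to multiply the hypothesis $(\partial_i y)^\sigma \# (py) = 0$ on the left by $y_i$, apply $\tau$, and exploit trace cyclicity together with the identity $(\partial_i y) \# y_i = \mathcal{N}_i(y)$ recorded in the previous lemma.

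Writing $\partial_i y = \sum_k a_k \otimes b_k$, this computation yields
\[
0 = \sum_k \tau(y_i b_k (py) a_k) = \sum_k \tau\bigl((py) a_k y_i b_k\bigr) = \tau\bigl(py \cdot (\partial_i y)\# y_i\bigr) = \tau(py \cdot \lambda y) = \lambda\, \Vert py\Vert_2^2,
\]
where the last equality uses that $p$ commutes with $y$ so that $(py)^*(py) = p^2 y^2 = py^2$. To conclude $\lambda = 0$ it then suffices to show $\Vert py \Vert_2^2 > 0$, i.e.\ $py \neq 0$, and this is the main obstacle. The argument splits into two cases: if $y \in \mathbb{C}$ the conclusions are immediate ($\lambda = 0$ from the eigenvector equation if $y \neq 0$, and trivially $\mathbb{C} \subset \operatorname{Alg}(y_1,\dots,\hat y_i,\dots,y_n)$). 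Otherwise $y$ is a nonconstant polynomial in $y_1,\dots,y_m$; since $\delta^*(y_1,\dots,y_m) = m$, Theorem \ref{thm:noZeroDivisors} applied to $y - c$ for each scalar $c$ rules out every atom of the spectral measure of $y$, and consequently any nonzero spectral projection $p$ of $y$ satisfies $py \neq 0$, since $py = 0$ would force $p$ to be supported on a putative atom at $0$.

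With $\lambda = 0$ in hand we obtain $\mathcal{N}_i(y) = 0$ in $M$, and algebraic freeness of $y_1,\dots,y_n$ (inherited from that of $y_1,\dots,y_m$) promotes this to an identity in the free algebra. Since $\mathcal{N}_i$ multiplies each monomial by its $y_i$-degree, every monomial appearing in $y$ must have $y_i$-degree zero, giving $y \in \operatorname{Alg}(y_1, \ldots, \hat y_i, \ldots, y_n)$. The moreover clause follows by iteration: applying the single-$i$ conclusion for $i = 1, 2, \ldots, n$ in succession, each $\mathcal{N}_i$-eigenvalue vanishes and $y$ contains no monomial involving any $y_i$, so $y \in \mathbb{C}$.
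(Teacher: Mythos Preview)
Your proof is correct and follows essentially the same approach as the paper's: both multiply the hypothesis by $y_i$, apply $\tau$ and traciality to obtain $\lambda\,\tau(py^2)=0$, and then invoke Theorem~\ref{thm:noZeroDivisors}. The paper phrases the final step contrapositively (if $\lambda\neq 0$ then $py=0$, whence $y$ is constant by Theorem~\ref{thm:noZeroDivisors}), whereas you first rule out atoms and then conclude $py\neq 0$; your detour through atoms is slightly less direct than simply applying Theorem~\ref{thm:noZeroDivisors} to the relation $y\cdot p=0$, but the logic is equivalent.
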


\begin{proof}
As in the proof of Lemma \ref{lemma:zero-der}, we compute:
$$
0 = \tau\left(y_i(\partial_iy)^\sigma\#(py)\right)
= \tau\left( py(\partial_i y)\#y_i \right)
= \tau\left( py \mathcal{N}_i(y) \right)
= \lambda \tau\left( pyyp \right).
$$
If $\lambda \neq 0$, it follows that $pyyp = 0$, hence $py = 0$.
But then by Theorem \ref{thm:noZeroDivisors} we have that $y$ is constant.
Thus in either case, $y \in \alg(y_1, \ldots, \hat y_i, \ldots, y_n)$.
\end{proof}

\begin{thm}
Let $y=y^* \in \operatorname{Alg}(y_1,\dots,y_n) \subset W^*(y_1,\dots,y_m)=M$ be a non-constant polynomial in a $W^*$-probability space and assume that there exists a dual system to $y_1,\dots,y_m$, i.e., operators $R_j\in B(L^2(M))$, $1\leq j\leq m$ so that $[R_j,y_k]=\delta_{j=k} P_1$, where $P_1$ is the orthogonal projection onto $1\in L^2(M)$.
Suppose further that for each $j$, with $\mathcal{N}_j$ the number operator corresponding to $y_j$, we have $\mathcal{N}_j(y) = \lambda_jy$.
Then the spectral measure of $y$ is Lebesgue absolutely continuous.
\end{thm}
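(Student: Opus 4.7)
The plan is to follow the proof of Theorem~\ref{prop:nonsing} almost verbatim, tracking a spectral projection $p$ through the trace computation and closing via Lemma~\ref{lemma:zero-der2} in place of Lemma~\ref{lemma:zero-der}.

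Assume for contradiction that the spectral measure of $y$ is not Lebesgue absolutely continuous. Lemma~\ref{lemma:sing-commute} then supplies finite-rank operators $0\le T_n \le 1$ and a nonzero spectral projection $p$ of $y$ such that $T_n\to p$ weakly and $\|[T_n,y]\|_1\to 0$. Fix $x\in M$ and $1\le i\le n$ and run the same calculation as in Theorem~\ref{prop:nonsing}: the product $\|Jx^*J R_i y\|_\infty\cdot \|[T_n,y]\|_1\to 0$ sandwiches the trace, and after rewriting $[y,R_i]$ using the dual-system identity the only change from before is that weak convergence $T_n\to p$ (instead of $T_n\to 1$) inserts a $p$ in the limit. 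Since $Jx^*J((\partial_i y)\#P_1)y$ is finite rank (as $P_1$ has rank one), the passage to the weak limit is justified and yields
\[
0 = \bigl|\operatorname{Tr}(Jx^*J\,((\partial_i y)\#P_1)\,yp)\bigr|
= \bigl|\tau\bigl(((\partial_i y)^\sigma \# (py))\,x\bigr)\bigr|,
\]
where we used that $p$ commutes with $y$, that $Jx^*J\in M'$, and that $\operatorname{Tr}(P_1 Z)=\tau(Z)$ for $Z\in M$. Arbitrariness of $x$ together with faithfulness of $\tau$ gives $(\partial_i y)^\sigma \# (py) = 0$ for every $1\le i\le n$.

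At this point the assumption that $y$ is an eigenvector of every number operator $\mathcal{N}_j$ is precisely what is needed to invoke the ``moreover'' clause of Lemma~\ref{lemma:zero-der2}, which forces $y\in\mathbb{C}$ and contradicts the non-constancy of $y$. The two hypotheses on $y_1,\dots,y_m$ needed by Lemma~\ref{lemma:zero-der2}, namely algebraic freeness and $\delta^*(y_1,\dots,y_m)=m$, are both standard consequences of the dual-system assumption: full non-microstates free entropy dimension follows from Voiculescu's original dual-system bounds on $\Phi^*$, and algebraic freeness then follows from Theorem~\ref{thm:noZeroDivisors}.

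The main obstacle, modest as it is, is purely bookkeeping around the limit: one must check that inserting the spectral projection $p$ does not upset the rearrangement $\operatorname{Tr}(P_1\,\cdot\,)=\tau(\,\cdot\,)$ and that $py=yp$ allows us to write the final expression in the form $((\partial_i y)^\sigma\#(py))$ that Lemma~\ref{lemma:zero-der2} is stated for. Beyond that, the argument requires no new ideas beyond those of Theorem~\ref{prop:nonsing}.
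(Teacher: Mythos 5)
Your proof is correct and follows essentially the same route as the paper's: invoke Lemma \ref{lemma:sing-commute} to obtain $T_n\to p$ with $\Vert[T_n,y]\Vert_1\to 0$, run the identical trace computation from Theorem \ref{prop:nonsing} with the spectral projection $p$ inserted in the weak limit, and close with Lemma \ref{lemma:zero-der2}. Your additional remarks that the dual-system hypothesis supplies the full free entropy dimension (via $\Phi^*<\infty$) and the algebraic freeness (via Theorem \ref{thm:noZeroDivisors}) needed by Lemma \ref{lemma:zero-der2} are accurate and make explicit what the paper leaves implicit.
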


Note that the hypothesis that $y$ is an eigenvector of each $\mathcal{N}_j$ is satisfied when $y$ is homogeneous in each $y_j$, and in particular, whenever $y$ is a monomial.

\begin{proof}
We proceed much as in the proof of Proposition \ref{prop:nonsing}.
Assume to the contrary that the spectral measure of $y$ is not Lebesgue absolutely continuous.
As before, we apply Lemma \ref{lemma:sing-commute} to find $0\leq T_n \leq 1$ finite rank operators with $T_n\to p$ weakly for $p$ a spectral projection of $y$, and $\Vert [T_n,y]\Vert_1 \to 0$.


Fix $x \in M$.
We compute as follows:
\begin{align*}
0
&= \lim_{n\to\infty} \Vert Jx^*J R_i y \Vert_\infty \Vert [T_n, y] \Vert_1 \\
&\geq \lim_{n\to\infty} \Big| \operatorname{Tr}( Jx^*J R_i y [T_n, y]) \Big| \\
&= \lim_{n\to\infty} \Big| \operatorname{Tr}(Jx^*J [y, R_i] yT_n) \Big| \\
&= \lim_{n\to\infty} \Big| \operatorname{Tr}(Jx^*J ((\partial_i y)\# P_1) yT_n) \Big| \\
&= \Big| \operatorname{Tr}( Jx^*J P_1 ((\partial_i y)^\sigma \# (yp)) ) \Big| \\
&= \Big| \tau(((\partial_i y)^\sigma\#(yp))x) \Big|
\end{align*}
Here we are able to pass to the limit of $T_n$ as $P_1$ is finite rank.

We conclude that $(\partial_i y)^\sigma\#(yp) = 0$ as $\tau$ is faithful and $x$ was arbitrary.
Our stronger assumptions on $y$ now allow us to apply Lemma \ref{lemma:zero-der2}, and we see $y$ is constant.
\end{proof}

A related statement is the following.  Denote by $\vec{T}$ an $m$-tuple of operators $\vec{T}=(T_1,\dots,T_m)$.  Write $\vec{T}\geq \vec{T'}$ if $T_j \geq T_j'$ for all $j=1,\dots,m$.  Write $\mathscr{R}^m_{+}$ for the set of all $m$-tuples $\vec{T}$ satisfying $0\leq T_j \leq 1$.  

\begin{proposition}  Assume that $y_1,\dots,y_m \in M$ admits a dual system $R_1,\dots,R_m$.  Then $$\liminf_{\vec{T}\in \mathscr{R}^m_+} \left\Vert \sum [T_j,y_j]\right\Vert_1 > 0.$$
\end{proposition}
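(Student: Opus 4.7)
The plan is to pair $\sum_j [T_j, y_j]$ against $R_i$ inside the operator trace and use the dual-system relation $[R_i, y_j] = \delta_{ij} P_1$ to extract an explicit nonzero quantity.  The key identity I aim to establish is
\[
\operatorname{Tr}\Bigl(R_i \sum_j [T_j, y_j]\Bigr) = -\langle T_i 1, 1\rangle,
\]
valid whenever the $T_j$ are regular enough for the cyclic property of the trace to apply (e.g., each $T_j$ trace class, which is the only relevant case since otherwise $\Vert \sum_j [T_j,y_j]\Vert_1 = \infty$ and the bound is trivial).  To derive it, I fix $i$, use cyclicity to rewrite $\operatorname{Tr}(R_i[T_j,y_j]) = \operatorname{Tr}([y_j,R_i]T_j)$, substitute $[y_j,R_i] = -\delta_{ij}P_1$, and sum in $j$.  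The non-commutative Hölder inequality then yields
\[
\langle T_i 1, 1\rangle \le \Vert R_i\Vert_\infty \cdot \Bigl\Vert \sum_j [T_j, y_j] \Bigr\Vert_1.
\]

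To convert this into a liminf bound, I take the cofinal starting point $\vec{T}_0 := (P_1, \ldots, P_1) \in \mathscr{R}^m_+$.  For any $\vec{T} \ge \vec{T}_0$, the two inequalities $T_i \ge P_1$ and $T_i \le 1$ together force $\langle T_i 1, 1\rangle = 1$; hence on the tail $\{\vec{T} : \vec{T} \ge \vec{T}_0\}$ I obtain $\Vert \sum_j [T_j, y_j] \Vert_1 \ge \Vert R_i\Vert_\infty^{-1}$.  Minimising over $i$ then gives $\liminf \ge \min_i \Vert R_i\Vert_\infty^{-1} > 0$.

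The main obstacle I anticipate is pinning down precisely what \emph{liminf} should mean, since $\mathscr{R}^m_+$ as literally defined contains $(1,\ldots,1)$, where all commutators vanish.  I would either implicitly restrict $\vec{T}$ to some proper operator ideal (trace class, compact, or finite rank), so that $(1, \ldots, 1)$ is excluded and the directed notion of liminf under $\ge$ is informative, or else rephrase the conclusion as a uniform lower bound along any net in $\mathscr{R}^m_+$ that converges strongly to $(1, \ldots, 1)$.  Once this interpretive choice is fixed, the dual-system trace identity above is essentially the whole proof, with only routine bookkeeping in the Schatten ideals remaining.
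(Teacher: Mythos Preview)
Your proposal is correct and follows essentially the same approach as the paper: pair $\sum_j [T_j,y_j]$ against $R_i$ via the operator trace, use cyclicity together with $[R_i,y_j]=\delta_{ij}P_1$ to reduce to $\operatorname{Tr}(P_1 T_i)=\langle T_i 1,1\rangle$, and conclude by H\"older. Your treatment is in fact slightly more careful than the paper's---you make explicit the cofinal choice $\vec{T}_0=(P_1,\dots,P_1)$ (which is what justifies $\langle T_i 1,1\rangle=1$, whereas the paper simply asserts ``$T_j\to 1$'') and you extract the quantitative bound $\min_i\|R_i\|_\infty^{-1}$, but the underlying argument is identical.
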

\begin{proof}
Assume for contradiction that we can find $\vec{T^n}\in \mathscr{R}^m_+$ satisfying $T_j\to 1$ and $\Vert \sum[T_j,y_j]\Vert_1 \to 0$.  Then
\begin{eqnarray*}
0&=&\lim_{n\to\infty} Tr\big(\sum_j R_i [T^n_j,y_j]\big) \\
   &=& \lim_{n\to\infty} Tr\big( [y_j,R_i] T^n_j \big) \\
   &=& \lim_{n\to\infty} Tr\big(P_1 T^n_i)\\ 
   & = & Tr(P_1) = 1
\end{eqnarray*}
which is a contradiction.
\end{proof}

\subsection{Decay of measure.}
\begin{lem}
\label{lem:coefficients}
Suppose that $y_1, \ldots, y_m \in W^*(y_1, \ldots, y_m) = M$ have a dual system of operators $R_1, \ldots, R_m \in B(L^2(M))$.
Assume that for some $P_{n}\in \alg(y_1, \ldots, y_m)$, there
exists a sequence of projections $f_{n}\in M$ so that $\Vert f_{n}\Vert_{\infty}=1$,
$\Vert f_{n}\Vert_{2}\to0$, $\Vert P_{n}f_{n}\Vert_{2}/\Vert f_{n}\Vert_{2}^{k}\to0$
for all $k\geq0$; moreover, assume that $p=\sup_{n}\deg P_{n}<+\infty$,
and for each monomial $y_{i_{1}}\cdots y_{i_{p}}$, its coefficient
$a_{i_{1}\cdots i_{p}}^{(n)}$ in $P_{n}$ is uniformly bounded in
$n$. Then all coefficients of monomials of degree $p$ in $P_{n}$
must converge to zero.\end{lem}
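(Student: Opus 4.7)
The plan is to proceed by induction on the degree bound $p$. The base case $p=0$ is immediate: then $P_n$ is a scalar $c_n$ with $|c_n|$ uniformly bounded, and the $k=1$ instance of the decay hypothesis forces $|c_n|\to 0$. For the inductive step, first pass to a subsequence along which every top-degree coefficient $a^{(n)}_{i_1\cdots i_p}$ converges to a limit $a_{i_1\cdots i_p}\in\C$; this is possible by compactness since they are uniformly bounded in a finite-dimensional space. The goal is then to show all these limits vanish, so suppose for contradiction that the limiting homogeneous polynomial $Q=\sum_{|J|=p}a_J X_{j_1}\cdots X_{j_p}$ is nonzero, and aim to derive a contradiction.

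The key move is to use the dual system to transport the super-polynomial decay of $\Vert P_nf_n\Vert_2$ into an algebraic constraint on $Q$. For each $j$ and each test monomial $y_K$ I would write
\[
\tau\bigl(y_K^*[R_j,P_n]f_n\bigr)=\tau\bigl(y_K^*R_j(P_nf_n)\bigr)-\tau\bigl(y_K^*P_n(R_jf_n)\bigr),
\]
and unfold the left-hand side using $[R_j,P_n]=(\partial_jP_n)\#P_1$ into a linear combination of the coefficients $a_J^{(n)}$ weighted by normalized moments $G_{I,J}^{(n)}:=\tau(y_I^*y_Jf_n)/\Vert f_n\Vert_2^2$. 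These moments are uniformly bounded by H\"older, and after dividing by a suitable power of $\Vert f_n\Vert_2$ the right-hand side vanishes faster than any power of $\Vert f_n\Vert_2$ thanks to the hypothesis $\Vert P_nf_n\Vert_2/\Vert f_n\Vert_2^k\to 0$. Passing to the limit along our subsequence should then yield a homogeneous linear system for the top-degree $a_J$ whose only solution is zero.

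The main obstacle, and where the real work will lie, is twofold. First, $\Vert P_n\Vert_\infty$ is not controlled under the hypotheses (the coefficients of degree below $p$ may blow up in $n$), so one cannot bound $\Vert P_nR_jf_n\Vert_2$ naively by an operator-norm estimate; every bound has to be produced at the level of $L^2$-pairings against $f_n$-localized test vectors, as above. Second, the argument needs the limiting Gram matrix $G_{I,J}=\lim_nG_{I,J}^{(n)}$ to be non-degenerate on the span of top-degree monomials in order to pin down the $a_J$. Here I would exploit the existence of the dual system as the source of rigid linear independence among products of the $y_i$'s, via the earlier results of Section~2 (in particular $\delta^*(y_1,\dots,y_m)=m$ and its consequences), in the hope that this linear independence propagates to uniform non-degeneracy of the $G^{(n)}$ on the top-degree block and hence of their limit.
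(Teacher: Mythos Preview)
Your two ``main obstacles'' are in fact fatal to the outline as written. In the splitting
\[
\tau\bigl(y_K^*[R_j,P_n]f_n\bigr)=\tau\bigl(y_K^*R_j(P_nf_n)\bigr)-\tau\bigl(y_K^*P_n(R_jf_n)\bigr),
\]
only the first term on the right carries the factor $P_nf_n$ and hence the super-polynomial decay; the second term involves $R_jf_n$, which has no reason to be smaller than $\Vert f_n\Vert_2$. After dividing by $\Vert f_n\Vert_2^2$ (or any power) this term does not tend to zero, so you never obtain a clean limiting linear system in the $a_J$. The second obstacle is equally serious: the ``conditional moments'' $\tau(y_I^*y_Jf_n)/\tau(f_n)$ have no reason to converge, and even along subsequences there is nothing forcing the limiting Gram matrix to be nondegenerate on the top-degree block (think of $f_n$ concentrating in spectrum so that several monomials become nearly proportional on $f_nL^2$). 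The algebraic freeness coming from $\delta^*=m$ is a statement about $\tau$, not about these localized states, and does not transfer.

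The paper's argument supplies exactly the missing ingredient: a \emph{left} projection matching $f_n$. Take the polar decomposition $P_n=u_nb_n$ and set $g_n=u_nf_nu_n^*$; then $\Vert g_nP_n\Vert_2=\Vert P_nf_n\Vert_2$, so one now controls $P_n$ on both sides. With $T_j^n=(g_n\otimes f_n)\#\partial_jP_n$ one gets $\Vert\sum_j[T_j^n,y_j]\Vert_1\le 2\Vert P_nf_n\Vert_2\Vert f_n\Vert_2$, and the dual system is used not via $[R_j,P_n]$ but to build a bounded test operator $D=T_j^{n\dagger\sigma}\#R_j$ with $[y_i,D]=\delta_{ij}T_i^{n\dagger}$, yielding $\Vert T_j^n\Vert_2^2\le C\Vert P_nf_n\Vert_2\Vert f_n\Vert_2$. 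One then \emph{slices}: $\Delta_jP_n:=\tau(g_n)^{-1}(\tau\otimes1)\bigl((g_n\otimes1)\#\partial_jP_n\bigr)$ is a polynomial of strictly smaller degree, with uniformly bounded coefficients, still satisfying $\Vert(\Delta_jP_n)f_n\Vert_2/\Vert f_n\Vert_2^k\to0$. Iterating $p$ times gives $\Delta_{i_1}\cdots\Delta_{i_p}P_n=a^{(n)}_{i_1\cdots i_p}$, and the $k=1$ decay forces this scalar to $0$. So the degree reduction you set up as an induction is realized concretely by $\Delta_j$, and the role of $g_n$ is precisely to kill the uncontrolled ``$P_nR_jf_n$'' side that blocks your approach.
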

\begin{proof}
Let $P_{n}=u_{n}b_{n}$ be the polar decomposition of $P_{n}$, with
$u_{n}$ a unitary and $b_{n}=(P_{n}^{*}P_{n})^{1/2}$. Then, letting
$g_{n}=u_{n}f_{n}u_{n}^{*}$, we obtain another projection $g_{n}$ with
$\Vert g_{n}\Vert_{\infty}=1$, $\Vert g_{n}\Vert_{2}=\Vert f_{n}\Vert_{2}$
and 
\[
\Vert P_{n}f_{n}\Vert_{2}=\Vert u_{n}b_{n}f_{n}\Vert_{2}=\Vert f_{n}b_{n}u_{n}^{*}\Vert_{2}=\Vert f_{n}b_{n}\Vert_{2}=\Vert f_{n}u_{n}^{*}u_{n}b_{n}\Vert_{2}=\Vert u_{n}f_{n}u_{n}^{*}u_{n}b_{n}\Vert_{2}=\Vert g_{n}P_{n}\Vert_{2}.
\]
Thus, in particular, $\Vert g_{n}P_{n}\Vert_{2}/\Vert g_{n}\Vert_{2}^{k}\to0$
, for any $k\geq0$. 

Then we have:
\[
(g_{n}\otimes f_{n})\#(P_{n}\otimes1-1\otimes P_{n})=(g_{n}\otimes f_{n})\#(\sum_{i}\partial_{i}P_{n})\#(y_{i}\otimes1-1\otimes y_{i}),
\]
so that if we set $T_{i}^{n}=(g_{n}\otimes f_{n})\#\partial_{i}P_{n}$,
then
\[
\Vert\sum[T_{i}^{n},y_{i}]\Vert_{1}=\Vert g_{n}P_{n}\otimes f_{n}-g_{n}\otimes P_{n}f_{n}\Vert_{1}\leq\Vert P_{n}f_{n}\Vert_{2}\Vert g_{n}\Vert_{2}+\Vert g_{n}P_{n}\Vert_{2}\Vert f_{n}\Vert_{2}=2\Vert P_{n}f_{n}\Vert_{2}\Vert f_{n}\Vert_{2}.
\]
Thus for any $D\in B(H)$,
\[
|\sum Tr(T_{i}^{n}[y_{i},D])|=|Tr(\sum[T_{i}^{n},y_{i}]D)|\leq\Vert\sum[T_{i}^{n},y_{i}]\Vert_{1}\Vert D\Vert_{\infty}\leq2\Vert P_{n}f_{n}\Vert_{2}\Vert f_{n}\Vert_{2}\Vert D\Vert_{\infty}.
\]
Note that
\[
\Vert T_{i}^{n\dagger}\Vert_{\pi}\leq\Vert f_{n}\otimes g_{n}\Vert_{\pi}\Vert(\partial_{i}P_{n})^{\dagger}\Vert_{\pi}\leq C
\]
where $C=\sup_{n}\max_{i}\Vert(\partial_{i}P_{n})^{\dagger}\Vert_{\pi}<\infty$
by our assumptions of the uniform boundedness of both the degree of
$P_{n}$ and the coefficients of all monomials in $P_{n}$.
Let $D^{(n)}=T_{j}^{n\dagger\sigma}\#R_{j}$.
Then $\Vert D^{(n)}\Vert\leq\Vert T_{j}^{n\dagger}\Vert_{\pi}\Vert R_{j}\Vert_{\infty}=C\Vert R_{j}\Vert_{\infty}$.
Moreover, $[y_{i},D^{(n)}]=\delta_{i=j}T_{i}^{n\dagger}$. From this
one gets that
\[
\Vert T_{j}^{n}\Vert_{2}^{2}=|\langle T_{j}^{n},T_{j}^{n}\rangle|=|\sum_{i}Tr(T_{i}^{n}[y_{i},D])|\leq2C\Vert R_{j}\Vert_{\infty}\Vert P_{n}f_{n}\Vert_{2}\Vert f_{n}\Vert_{2}.
\]
Thus
\[
\Vert T_{j}^{n}\Vert_{2}\leq\sqrt{2\Vert R_{j}\Vert_{\infty}C}\Vert P_{n}f_{n}\Vert_{2}^{1/2}\Vert f_{n}\Vert_{2}^{1/2}.
\]
Let now 
\[
\Delta_{j}P_{n}=\tau(g_{n})^{-1}(\tau\otimes1)((g_{n}\otimes1)\#\partial_{j}P_{n})).
\]
 Then
\[
\deg\Delta_{j}P_{n}<\deg P_{n}
\]
and coefficients of all monomials in $\Delta_{j}P_{n}$ are uniformly bounded.
Moreover,
\begin{eqnarray*}
\Vert\Delta_{j}P_{n}f\Vert_{2} & = & \tau(g_{n})^{-1}\Vert\tau\otimes1((g_{n}\otimes f_{n})\#\partial_{j}P_{n}\Vert_{2}\\
 & \leq & \tau(g_{n})^{-1}\Vert(g_{n}\otimes f_{n})\#\partial_{j}P_{n}\Vert_{2}\\
 & \leq & \tau(g_{n})^{-1}\Vert T_{j}^{n}\Vert_{2}\\
 & \leq & \sqrt{2\Vert R_{j}\Vert_{\infty}C}\Vert P_{n}f_{n}\Vert_{2}^{1/2}\Vert f_{n}\Vert_{2}^{1/2}.
\end{eqnarray*}
It follows that
\[
\Vert\Delta_{j}P_{n}f_{n}\Vert_{2}/\Vert f_{n}\Vert_{2}^{k}\leq\sqrt{2\Vert R_{j}\Vert_{\infty}C}\cdot\Vert f_n\Vert_2^{1/2}\cdot\sqrt{\Vert P_{n}f_{n}\Vert_2/\Vert f_{n}\Vert_{2}^{2k}}\to0\qquad\textrm{as }n\to\infty.
\]

Thus $\Delta_{i}P_{n}$ once again satisfies the conditions of the
Lemma. Iterating, so does $\Delta_{i_{1}}\Delta_{i_{2}}\cdots\Delta_{i_{p}}P_{n}$,
for any sequence $i_{1},\dots,i_{p}$.

Let $p=\max_{n}\deg P_{n}$, and let $a_{i_{1}\cdots i_{p}}^{(n)}$
be the coefficient of the monomial $y_{i_{1}}\cdots y_{i_{p}}$ in
$P_{n}$. Then
\[
a_{i_{1}\cdots i_{p}}^{(n)}=\Delta_{i_{1}}\cdots\Delta_{i_{p}}P_{n},
\]
which means that
\[
|a_{i_{1}\dots i_{p}}^{(n)}|=\Vert a_{i_{1}\dots i_{p}}^{(n)}f_{n}\Vert_{2}/\Vert f_{n}\Vert_{2}\to0,
\]
as claimed.\end{proof}
\begin{lem}
\label{lem:polydecay}
Let $P\in\alg(y_1, \ldots, y_m)$ where again $y_1, \ldots, y_m$ admit a dual system.
Suppose that there exists a sequence of projections $f_{n}\in M$ and $t\in\mathbb{C}$ so that
$\Vert f_{n}\Vert_{\infty}=1$, $\Vert f_{n}\Vert_{2}\to0$, $\Vert Pf_{n}-tf_{n}\Vert_{2}/\Vert f_{n}\Vert_{2}^{k}\to0$
for all $k\geq0$. Then $P$ is constant.
\end{lem}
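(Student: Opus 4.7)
The plan is to reduce the statement directly to Lemma \ref{lem:coefficients} by shifting $P$ by the scalar $t$ and applying that lemma to the resulting constant sequence of polynomials.

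Concretely, set $Q = P - t \in \alg(y_1,\dots,y_m)$. If $Q = 0$ there is nothing to prove, so assume $Q \neq 0$ and let $p = \deg Q \geq 0$. Define $P_n := Q$ for every $n$. The hypotheses of Lemma \ref{lem:coefficients} are then immediate: the sequence has constant degree $p$, the coefficients are fixed (hence uniformly bounded in $n$), and the decay condition $\Vert P_n f_n \Vert_2 / \Vert f_n \Vert_2^k \to 0$ is exactly our standing hypothesis $\Vert P f_n - t f_n \Vert_2 / \Vert f_n \Vert_2^k \to 0$.

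Applying Lemma \ref{lem:coefficients} now forces every coefficient of every monomial of degree $p$ in $Q$ to vanish. If $p \geq 1$, this contradicts the fact that $p$ was defined as the degree of $Q$. If $p = 0$, the "degree-$p$" coefficient is just the constant term of $Q$, which is then forced to be zero, yielding $Q = 0$. Either way $Q = 0$, so $P = t$ is constant, as required.

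I do not anticipate any real obstacle: once the formal reduction to a constant sequence is in place, Lemma \ref{lem:coefficients} carries all the weight. The only point worth sanity-checking is that the conclusion of Lemma \ref{lem:coefficients} retains content when $p = 0$, and this is immediate from the $k = 1$ case of the hypothesis, since a nonzero scalar $c$ would satisfy $|c| = \Vert c f_n \Vert_2 / \Vert f_n \Vert_2 \not\to 0$. In fact one could dispense with the $p=0$ branch entirely by handling that case directly via this observation, and invoke Lemma \ref{lem:coefficients} only when $\deg Q \geq 1$.
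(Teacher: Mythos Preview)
Your proposal is correct and follows exactly the paper's approach: replace $P$ by $P-t$, apply Lemma~\ref{lem:coefficients} to the constant sequence $P_n=P-t$, and deduce that the top-degree coefficients vanish, forcing $P-t=0$. Your treatment is in fact slightly more careful than the paper's about the degree-$0$ edge case.
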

\begin{proof}
By replacing $P$ with $P-t1$ we may assume that $t=0$. Let $P_{n}=P$
be the constant sequence. Then the coefficient of any monomial of
maximal degree must be zero by Lemma \ref{lem:coefficients}. This means that $P=0$. \end{proof}

\begin{thm}
Let $y = y^* \in \alg(y_1, \ldots, y_n)$ be non-constant, where $y_1, \ldots, y_n$ admit a dual system, and let $\mu$ be the spectral measure of $y$.
Then for every $t \in \R$ there is some $\alpha > 0$ so that $\mu\paren{[t, t+\epsilon]} \leq \epsilon^\alpha$ for all $\epsilon > 0$ small enough.
\end{thm}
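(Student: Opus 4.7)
The plan is to apply Lemma \ref{lem:polydecay} with $P = y$ and argue by contradiction. Fix $t \in \R$ and suppose the desired polynomial decay fails at $t$: for every $\alpha > 0$ and every $\epsilon_0 > 0$ there exists some $\epsilon \in (0, \epsilon_0)$ with $\mu([t, t+\epsilon]) > \epsilon^\alpha$. A diagonal selection then produces a single sequence $\epsilon_n \downarrow 0$ satisfying $\mu([t, t+\epsilon_n]) > \epsilon_n^{1/n}$.

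I take $f_n := 1_{[t, t+\epsilon_n]}(y) \in W^*(y) \subseteq M$ and verify the hypotheses of Lemma \ref{lem:polydecay} for this sequence. The existence of a dual system for $y_1, \ldots, y_n$ forces $\Phi^*(y_1, \ldots, y_n) < \infty$ and hence $\delta^*(y_1, \ldots, y_n) = n$, so Theorem \ref{thm:noZeroDivisors} ensures that $\mu$ has no atoms; in particular $t$ is not an atom, which yields $\norm{f_n}_2^2 = \mu([t, t+\epsilon_n]) \to 0$ while $\norm{f_n}_\infty = 1$ for all $n$ sufficiently large (if $\mu$ vanishes on a neighbourhood of $t$ the conclusion is immediate, so I may restrict to the subsequence where $f_n \neq 0$). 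Since $f_n$ is a spectral projection of $y$ onto $[t, t+\epsilon_n]$, the operator norm estimate $\norm{(y-t)f_n}_\infty \leq \epsilon_n$ yields $\norm{(y-t)f_n}_2 \leq \epsilon_n \norm{f_n}_2$; combined with $\norm{f_n}_2 > \epsilon_n^{1/(2n)}$, for any fixed $k \geq 1$ and all $n$ large I obtain
\[
\frac{\norm{y f_n - t f_n}_2}{\norm{f_n}_2^k} \leq \frac{\epsilon_n}{\norm{f_n}_2^{k-1}} \leq \epsilon_n^{1 - (k-1)/(2n)} \longrightarrow 0,
\]
while the $k = 0$ case is immediate from $\norm{(y-t)f_n}_2 \leq \epsilon_n \norm{f_n}_2 \to 0$. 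Lemma \ref{lem:polydecay} then forces $y$ to equal the constant $t$, contradicting non-constancy.

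The heavy lifting is already encoded in Lemma \ref{lem:polydecay}, which provides the rigidity statement that no non-constant polynomial in $y_1, \ldots, y_n$ can be ``near a scalar'' on a sequence of ever-smaller projections to infinite polynomial order. The only real subtleties in extracting the theorem are (i) the diagonalization that converts the failure of decay at every exponent $\alpha$ into one sequence $\epsilon_n$ along which $\mu([t, t+\epsilon_n])$ beats every power of $\epsilon_n$, and (ii) the appeal to $\delta^*(y_1, \ldots, y_n) = n$ (automatic from the dual system) to exclude atoms, since an atom at $t$ would simultaneously prevent $\norm{f_n}_2 \to 0$ and falsify the statement. Both steps are short once the connection to Lemma \ref{lem:polydecay} is made, so I do not anticipate a significant obstacle beyond choosing the diagonal sequence carefully enough that the exponent $1 - (k-1)/(2n)$ stays positive for each fixed $k$.
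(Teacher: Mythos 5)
Your argument is correct and follows the same route as the paper: choose the diagonal sequence $\epsilon_n$ from the failure of polynomial decay, set $f_n = \chi_{[t,t+\epsilon_n]}(y)$, verify the hypotheses, and invoke Lemma \ref{lem:polydecay}. You make explicit a point the paper leaves implicit — that $\|f_n\|_2 \to 0$ requires the absence of an atom at $t$, which you correctly derive from the dual system via $\Phi^*<\infty \Rightarrow \delta^*=n$ and Theorem \ref{thm:noZeroDivisors} — but otherwise the two proofs coincide, with your explicit $\mu([t,t+\epsilon_n])>\epsilon_n^{1/n}$ replacing the paper's equivalent logarithmic-ratio bookkeeping.
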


\begin{proof}
We will say that $\mu$ satisfies condition $P(t,\alpha)$, $\alpha>0$ if the inequality
\[
\mu[t,t+\epsilon]\leq\epsilon^{\alpha}
\]
holds for sufficiently small $\epsilon$. Equivalently, we require
that
\[
\log\mu[t,t+\epsilon]\leq\alpha\log\epsilon
\]
for sufficiently small $\epsilon$. 

Fix $t\in\R$, and assume to the contrary $P(t,\alpha)$ does not hold for any $\alpha>0$.
Then there exists a sequence $\epsilon_{n}\to0$ with the property
that
\[
\frac{\log\mu[t,t+\epsilon_{n}]}{\log\epsilon_{n}}\to0.
\]

Put $\lambda_{n}=\mu[t,t+\epsilon_{n}]^{1/2}$. Then
$
\frac{\log\lambda_{n}}{\log\epsilon_{n}}\to0
$. 
Thus
\[
\frac{(1-k)\log\lambda_{n}+\log\epsilon_{n}}{\log\epsilon_{n}}\to1.
\]
Thus
\[
(1-k)\log\lambda_{n}+\log\epsilon_{n}\to-\infty.
\]
Exponentiating, we get
\[
\epsilon_{n}\lambda^{1-k}\to0
\]

Let $f_{n}=\chi_{[t,t+\epsilon]}(y)$. Then $\Vert f_{n}\Vert_{\infty}=1$
and $\Vert f_{n}\Vert_{2}=\lambda_{n}$.
It follows that
\[
\Vert yf_{n}-tf_{n}\Vert_{2}\lesssim\epsilon_{n}\lambda_{n}
\]
and so
\[
\Vert yf_{n}-tf_{n}\Vert_{2}/\Vert f_{n}\Vert_{2}^k\lesssim\epsilon_{n}\lambda_{n}^{1-k}\to0.
\]
Lemma \ref{lem:polydecay} now implies that $y$ is constant, a contradiction.
\end{proof}

\end{document}